\theoremstyle{thm}
\newtheorem{thm}{Theorem}[section]
\theoremstyle{lem}
\newtheorem{lem}[thm]{Lemma}
\newtheorem{prop}[thm]{Proposition}
\newtheorem{defn}[thm]{Definition}
\theoremstyle{rem}
\newtheorem{rem}[thm]{Remark}
\newtheorem{exe}[thm]{Example}
\newcommand{\G}{\mathcal{G}}
\newcommand{\N}{\mathbb{N}}
\title{\textbf{Isomorphism Theorems for Groupoids and Some Applications}}
\author{Jes\'{u}s \'{A}vila and V\'{\i}ctor Mar\'{\i}n$^{\text{1}}$\\
   \small $^{\text{1}}$Departamento de Matem\'{a}ticas y Estad\'{i}stica \\
   \small Universidad del Tolima\\
   \small Santa Helena, Ibagu\'{e}, Colombia\\
   \small e-mail: javila@ut.edu.co, vemarinc@ut.edu.co\\
  H\'{e}ctor Pinedo $^{\text{2}}$\\
   \small $^{\text{2}}$ Escuela de Matem\'{a}ticas\\
   \small Universidad Industrial de Santander\\
   \small Cra. 27 calle 9, Bucaramanga, Colombia\\
   \small  e-mail:hpinedot@uis.edu.co}
   \date{\today}
\begin{document}
\maketitle
\begin{abstract}
\noindent
Using an algebraic point of view we present an  introduction to the groupoid theory, that is, we give fundamental properties of groupoids as, uniqueness of inverses and properties of the identities, and   study subgroupoids, wide subgroupoids and normal subgroupoids. We also present   the isomorphism theorems for groupoids and as applications, we obtain the corresponding version of the Zassenhaus Lemma and the Jordan-H\"{o}lder Theorem for groupoids. Finally inspired by  the Ehresmann-Schein-Nambooripad Theorem we improve a result of R. Exel concerning a one-to-one correspondence between partial actions of groups and actions of inverse semigroups.
\end{abstract}

\noindent
\textbf{2010 AMS Subject Classification:} Primary 20L05, 18E05. Secondary 16W55, 20N02.\\
\noindent
\textbf{Key Words:} Groupoid,  subgroupoid,  homomorphism of groupoid.

\section{Introduction}
The concept of  groupoid from an algebraic point of view  appeared for the first time in \cite{B}. From this setting a (Brandt) groupoid can be seen as a generalization of a group, that is, a set with a partial multiplication on it that could contain many identities.

Brandt groupoids were generalized by C. Ehresmann in \cite{E}, where the author added further structures such as topological and differentiable.
Other equivalent definitions of groupoids and their properties appear in \cite{Br}, where a groupoid is defined as a small category where each morphism is invertible.

In \cite[Definition 1.1]{I} the author follows the definition given by Ehresmann  and presents the notion of groupoid as a particular case of universal algebra,  he defines strong homomorphism for groupoids and proves the correspondence theorem in this context. The Cayley Theorem for groupoids is also presented in \cite[Theorem 3.1]{I2}


Recently, some applications of groupoids to the study of partial actions are presented in different branches, for instance, in \cite{G} the author constructs a Birget-Rhodes expansion $\G^{\rm{BR}}$ associated with an ordered groupoid $\G$ and shows that  it  classifies partial actions of $\G$ on sets, in the topological context  in \cite{NY} is treated the globalization problem, connections between partial actions of groups and groupoids are given in \cite{A1, A2}.  Also,  ring theoretic and cohomological results of  global and partial actions of groupoids on algebras are obtained in
 \cite{Ba, BFP, BP, BPi2, NYOP, NYOP2, PF}. 
Galois theoretic  results for groupoid actions were obtained in \cite{BP, CT, PT1, PT}.  Finally globalization problem for partial groupoid actions have been considered in \cite{BPi, MP, NY}. 

In \cite{PT}  Paques and Tamusiunas give some structural definitions  in the context of groupoid such as abelian groupoid, subgroupoid, normal subgroupoid and show necessary and sufficient conditions for that a subgroupoid to be normal. Furthermore, they build quotient groupoids.

Due to the applications of the groupoids to partial actions, and their usefulness,  we will  give an elementary introduction to the  theory of groupoids from an axiomatic definition following Lawson  \cite{L}.

Our  principal goal in this work is to continue the algebraic development of  a groupoid theory. The paper is organized as follows.
After of introduction, in section 2, we present  groupoids from an  axiomatic point and  show some properties of them. In section 3 we recall the notions of some substructures of groupoids, such as subgroupoid, wide subgroupoid, and normal subgroupoid. In section 4  we  prove the  correspondence and isomorphism theorems for groupoids. In the final section we show an application of section four, we prove the Zassenhaus Lemma and the H\"{o}lder Theorem for groupoids; and we improve \cite[Theorem 4.2]{EX} using the Ehresmann-Schein-Nambooripad theorem.

It is important to note that the notion of groupoid can be presented from categories, algebraic structures, and universal algebra. In the last setting, the isomorphism theorems are valid, but the idea is to do an algebraic presentation and verify which assumptions are necessary. So it is possible to reach a wider audience.

\medskip

\section{Groupoids}

Now, we give two definitions of groupoids from an  algebraic point of view. 

\begin{defn}\cite[p. 78]{L}.\label{d2}
Let $\mathcal{G}$ be a set equipped with a partial binary operation on $\G\times \G$  which is denoted by concatenation. If $g,h \in \mathcal{G}$ and the product $gh$ is defined, we write $\exists gh$. An element $e\in \mathcal{G}$  is called an identity if
\begin{equation}\label{iden} \exists eg\,\,\,\text{ implies} \,\,\,eg=g\,\,\, \text{and}\, \,\,\, \exists g'e\,\,\,\text{ implies}\,\,\, g'e=g'.\end{equation}
 The set of identities of $\mathcal{G}$ is denoted by $\mathcal{G}_0$. Then $\mathcal{G}$ is said to be a groupoid if the following axioms hold:
\begin{enumerate}
\item[(i)] $\exists g(hl)$, if and only if, $\exists (gh)l$ and $g(hl)=(gh)l$.
\item[(ii)] $\exists g(hl)$, if and only if, $\exists gh$ and $\exists hl$.
\item[(iii)] For each $g\in \mathcal{G}$, there are unique identities $d(g)$ and $r(g)$ such that $\exists g d(g)$ and $\exists r(g)g.$

\item[(iv)] For each $g\in \mathcal{G}$, there is an element $g^{-1}\in \mathcal{G}$ such that $\exists g^{-1}g$, $\exists gg^{-1}$, $d(g)=g^{-1}g$, and $r(g)=gg^{-1}$.
\end{enumerate}
\end{defn}

The following definition of groupoid is presented in \cite[Definition 1.1]{Re}.
\begin{defn} \label{re}A groupoid is a  set endowed with a product map
\begin{equation}\label{prodg}\mathcal{G}^2\ni (g,h)\mapsto gh\in \G, \end{equation}
where the set $\mathcal{G}^2\subseteq \G\times \G$ is called the set of composible pairs and an inverse  map $\G\ni g \mapsto g^{-1}\in \G$ such that  for all $g,h,l\in \G$ the following relations are satisfied.
\begin{enumerate}
\item[(G1)] $(g^{-1})^{-1}=g;$
\item[(G2)] If $(g,h), (h,l)\in \G^2,$  then  $(gh,l), (g,hl)\in \G^2$ and $(gh)l=g(hl);$
\item[(G3)] $(g^{-1}, g)\in \G^2$  and  if $(g,h)\in \G^2,$ then  $g^{-1}(gh)=h;$

\item[(G4)]  $(g,g^{-1})\in \G^2$  and if $(l,g)\in \G^2,$ then  $(lg)g^{-1}=l.$
\end{enumerate}
\end{defn}

We shall check that Definitions \ref{d2} and \ref{re} are equivalent. First, we need a couple of lemmas.

\begin{lem}\label{l1}(\cite{F}, Lema 1.1.4) Suppose that $\G$ is a groupoid in the sense of Definition \ref{d2}.
Let $g,h \in \mathcal{G}.$ Then  $\exists gh$, if and only if, $d(g)=r(h)$.
\end{lem}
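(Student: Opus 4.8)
The plan is to prove the two implications separately, leaning on the associativity axioms (i) and (ii) together with the \emph{uniqueness} of the source and range identities guaranteed by axiom (iii). Throughout I would keep in mind that the only tools available are the four axioms of Definition \ref{d2} and the defining property \eqref{iden} of identities; in particular, nothing about $d(g)=r(h)$ may be assumed, only derived.

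For the implication $d(g)=r(h)\Rightarrow \exists gh$, I would start from the two products that always exist by axiom (iii), namely $\exists g\,d(g)$ and $\exists r(h)h$. Substituting the hypothesis $d(g)=r(h)$, these read $\exists g\,d(g)$ and $\exists d(g)h$. Axiom (ii), applied to the triple $(g,d(g),h)$, then yields $\exists g(d(g)h)$. Since $d(g)$ is an identity and $\exists d(g)h$, the defining property \eqref{iden} gives $d(g)h=h$, so that $g(d(g)h)=gh$; hence $\exists gh$, as desired.

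For the implication $\exists gh\Rightarrow d(g)=r(h)$, I would use the inverse supplied by axiom (iv): one always has $\exists g^{-1}g$, and by hypothesis $\exists gh$. Applying axiom (ii) to the triple $(g^{-1},g,h)$ gives $\exists g^{-1}(gh)$, whereupon axiom (i) delivers $\exists(g^{-1}g)h$, that is $\exists d(g)h$ because $d(g)=g^{-1}g$ by (iv). Now $d(g)$ is an identity satisfying $\exists d(g)h$. The key step is then to invoke the uniqueness clause of axiom (iii): $r(h)$ is by definition the unique identity $e$ with $\exists eh$, and therefore $d(g)=r(h)$.

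I expect the main subtlety to lie in this second implication, where the temptation is to try to prove an equation of elements rather than to recognize that the uniqueness of the source/range identities in (iii) is precisely what converts the existence statement $\exists d(g)h$ into the identification $d(g)=r(h)$. The remaining work is routine bookkeeping: one must check, before each application of (i) and (ii), that the required composites (such as $\exists g\,d(g)$ from (iii) and $\exists g^{-1}g$ from (iv)) are indeed defined, so that the biconditional forms of the axioms may legitimately be applied.
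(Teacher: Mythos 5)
Your proof is correct and follows essentially the same route as the paper's: both directions are handled by using axioms (ii)--(iv) to insert or strip off an identity next to $g$ or $h$. The only cosmetic difference is in the direction $\exists gh\Rightarrow d(g)=r(h)$, where the paper forms the product $d(g)r(h)$ and squeezes ($d(g)=d(g)r(h)=r(h)$), while you form $d(g)h$ and invoke the uniqueness clause of axiom (iii); both steps are valid.
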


\begin{proof}
Let $g,h \in \mathcal{G}$ such that  $\exists gh$. By (iv) of Definition \ref{d2}, we have that $\exists g^{-1}, \exists h^{-1}$, $d(g)=g^{-1}g$, and $r(h)=hh^{-1}$. Since $ \exists gh$, then $\exists g^{-1}ghh^{-1}$. That is, $\exists d(g)r(h)$. Now, since $d(g)$ and $r(h)$ are identities, then $d(g)=d(g)r(h)=r(h)$. Conversely, if $d(g)=r(h)$, then $\exists gr(h)$ and since $gr(h)=g(hh^{-1})$ we have that $\exists g(hh^{-1})$. Whence by (ii) of Definition \ref{d2}, we have that $\exists gh$.
\end{proof}
\begin{lem} \label{unique}  Suppose that $\G$ is a groupoid in the sense of Definition \ref{d2}. Then the element $g^{-1}$ in (iv)  is unique and $(g^{-1})^{-1}=g$.
\end{lem}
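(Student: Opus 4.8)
The plan is to prove uniqueness first and then deduce $(g^{-1})^{-1}=g$ as a consequence. For uniqueness, suppose $k\in\G$ is any element enjoying the same properties as $g^{-1}$ in axiom (iv), namely $\exists kg$, $\exists gk$, $d(g)=kg$ and $r(g)=gk$; the goal is to show $k=g^{-1}$. The idea is a single associative sandwich: I would look at the product $g^{-1}(gk)$. Since $\exists g^{-1}g$ (by (iv) applied to $g$) and $\exists gk$ (by the hypothesis on $k$), axiom (ii) guarantees $\exists g^{-1}(gk)$, and then axiom (i) gives $g^{-1}(gk)=(g^{-1}g)k$.

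Next I would collapse each side using the identity-absorption property (\ref{iden}). On the left, $gk=r(g)$, so $g^{-1}(gk)=g^{-1}r(g)$; since $r(g)$ is an identity and this product is defined, (\ref{iden}) yields $g^{-1}r(g)=g^{-1}$. On the right, $g^{-1}g=d(g)$, so $(g^{-1}g)k=d(g)k$, and again, since $d(g)$ is an identity and the product is defined, (\ref{iden}) yields $d(g)k=k$. Combining the two computations gives $g^{-1}=k$, proving uniqueness.

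For the statement $(g^{-1})^{-1}=g$, the plan is to verify that $g$ itself satisfies the four conditions that axiom (iv) imposes on the inverse of $g^{-1}$, and then appeal to the uniqueness just established. Existence of the products $\exists gg^{-1}$ and $\exists g^{-1}g$ is immediate from (iv) applied to $g$. The two remaining identities, $d(g^{-1})=gg^{-1}$ and $r(g^{-1})=g^{-1}g$, I would read off from Lemma \ref{l1}: since $\exists g^{-1}g$ we get $d(g^{-1})=r(g)=gg^{-1}$, and since $\exists gg^{-1}$ we get $r(g^{-1})=d(g)=g^{-1}g$. Hence $g$ plays exactly the role of the inverse of $g^{-1}$, and uniqueness forces $(g^{-1})^{-1}=g$.

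I expect the only delicate point to be the bookkeeping of \emph{existence} of products: each time a product is written it must first be shown to be defined before it can be manipulated. Here this is handled cleanly by axiom (ii), which produces $\exists g^{-1}(gk)$ from the existence of its two factors, and by Lemma \ref{l1}, which lets one compute the domains and ranges of $g^{-1}$. Once those existence checks are in place, everything else reduces to repeated application of the absorption rule (\ref{iden}).
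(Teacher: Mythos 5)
Your proof is correct and follows essentially the same route as the paper's: a single application of associativity combined with the absorption property \eqref{iden} of identities. The only cosmetic difference is that you sandwich $g^{-1}(gk)=(g^{-1}g)k$ to identify an arbitrary candidate $k$ directly with $g^{-1}$, whereas the paper multiplies $yg=zg$ on the right by $g^{-1}$ to show two arbitrary candidates coincide; you also spell out the verification (via Lemma \ref{l1}) that $g$ satisfies the defining conditions of an inverse of $g^{-1}$, a step the paper leaves implicit.
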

\begin{proof}  For each $g\in \mathcal{G}$, assume that there exist $y,z\in \mathcal{G}$ such that $\exists yg$, $\exists gy$, $\exists zg$, $\exists gz$, $yg=d(g)=zg$, and $gy=r(g)=gz$. Notice that $yg=zg$ implies that $(yg)g^{-1}=(zg)g^{-1}$, which is defined by Definition \ref{d2} (ii), and then, by associativity, $yr(g)=zr(g)$. Thus $yd(y)=zd(z)$, and so $y=z$. Analogous for $gy=gz$. In particular, the inverse is unique.

Finally, the equality $(g^{-1})^{-1}=g$ follows from the uniqueness of the inverse of $g^{-1}.$
\end{proof}
We give the following.
\begin{prop} Let $\G$ be a set. Then it is a groupoid in the sense of Definition \ref{d2}, if and only if, it is a groupoid in the sense of Definition \ref{re}.
\end{prop}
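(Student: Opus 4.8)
The plan is to make the two definitions into literal translations of one another by declaring, in both directions, that the composable set is exactly $\G^2 := \{(g,h) : \exists\, gh\}$, keeping the inverse map the same, and then matching the ``defined/composable'' bookkeeping. For the easy implication, Definition \ref{d2} $\Rightarrow$ Definition \ref{re}, I would use the inverse furnished by (iv), which is unique by Lemma \ref{unique}, so that (G1) is exactly Lemma \ref{unique}. For (G2), starting from $(g,h),(h,l)\in\G^2$, axiom (ii) yields $\exists\, g(hl)$ and then axiom (i) yields $\exists\, (gh)l$ together with $(gh)l=g(hl)$, which is (G2) after reading off the composable pairs. For (G3) I note $(g^{-1},g)\in\G^2$ by (iv); given $(g,h)\in\G^2$, axiom (ii) gives $\exists\, g^{-1}(gh)$, axiom (i) gives $g^{-1}(gh)=(g^{-1}g)h=d(g)h$, and since Lemma \ref{l1} forces $d(g)=r(h)$, the identity condition \eqref{iden} gives $d(g)h=r(h)h=h$. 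Axiom (G4) is symmetric, using $r(g)g=g$ and Lemma \ref{l1}. This direction is routine translation.

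The substantial direction is Definition \ref{re} $\Rightarrow$ Definition \ref{d2}. Here I declare $\exists\, gh \iff (g,h)\in\G^2$, keep the same product and inverse, and set $d(g):=g^{-1}g$, $r(g):=gg^{-1}$, and $\G_0:=\{g^{-1}g : g\in\G\}$. The crux is to recover, from the purely ``forward'' composability asserted by (G2)--(G4), the intrinsic criterion for when a product is defined. I would build this through a chain of lemmas: (a) uniqueness of inverse, namely that $(a,b)\in\G^2$ and $ab=aa^{-1}$ force $b=a^{-1}$, proved in two lines by applying (G3) to $(a,b)$ and then to $(a,a^{-1})$; (b) the relations $g\,d(g)=g$ and $r(g)\,g=g$ with their composabilities, read off from (G2) on the triple $(g,g^{-1},g)$ together with (G3), (G4); (c) the facts that each $e\in\{d(g),r(g)\}$ is idempotent and self-inverse, whence $d(e)=r(e)=e$; and then (d) the key criterion, the analog of Lemma \ref{l1}: $(g,h)\in\G^2 \iff d(g)=r(h)$. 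The forward half of (d) comes from applying (G2) to $(g^{-1},g,h)$ and then to $(d(g),h,h^{-1})$ to obtain $d(g)\,r(h)=r(h)$, and collapsing this to $d(g)=r(h)$ via (G4) and the unit properties of (c); the backward half comes from (G2) on the triple $(g,hh^{-1},h)$ using $r(h)h=h$.

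With criterion (d) in hand the remaining structure falls out. I would first show $r(gh)=r(g)$ and $d(gh)=d(h)$ for composable pairs, obtained cleanly from $(gh)h^{-1}=g$ (resp. $g^{-1}(gh)=h$) together with (d) and (c), without yet invoking any inverse-of-product formula; then $(gh)^{-1}=h^{-1}g^{-1}$ follows from the uniqueness lemma (a). Each $d(g),r(g)$ is next checked to satisfy the identity condition \eqref{iden} using (d) and (b), which gives axiom (iv) and the existence part of (iii), while uniqueness in (iii) follows from (G3) by left-cancelling $g$. Finally, axiom (ii) reduces to its only nontrivial implication $\exists\, g(hl)\Rightarrow\exists\, gh$, which via (d) becomes $d(g)=r(hl)=r(h)$; the converse is exactly (G2). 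Axiom (i) is handled the same way: its backward implication $\exists\,(gh)l\Rightarrow\exists\, g(hl)$ becomes $d(h)=d(gh)=r(l)$ via (d), and the equality of the two associated products is immediate from (G2).

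I expect the main obstacle to be precisely establishing the composability criterion (d) and the idempotent/self-inverse properties (c) on which it rests. Renault's axioms only ever assert that certain specified pairs are composable, so one must bootstrap, entirely from (G1)--(G4), the assertion that $\G^2$ coincides with $\{(g,h): g^{-1}g=hh^{-1}\}$, and must track with care at every step which pairs are already known to be composable before any appeal to associativity. Once (c) and (d) are secured, the verification of (i)--(iv) is bookkeeping.
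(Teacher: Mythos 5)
Your proposal is correct, and the forward direction (Definition \ref{d2} $\Rightarrow$ Definition \ref{re}) matches the paper's argument essentially step for step, including the appeal to Lemma \ref{unique} for (G1) and to Lemma \ref{l1} for (G3). The converse direction, however, is organized quite differently. The paper never establishes a composability criterion inside Renault's framework: it verifies axioms (i) and (ii) by direct manipulation of (G2)--(G4) (e.g.\ from $\exists g(hl)$ it recovers $h=(hl)l^{-1}$ via (G4), then applies (G2) to the pair $(g,hl),(hl,l^{-1})$ to conclude $(g,h)\in \G^2$), and for (iii)--(iv) it simply sets $d(g)=g^{-1}g$, $r(g)=gg^{-1}$ and cites (G2)--(G4). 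You instead bootstrap the intrinsic criterion $(g,h)\in\G^2 \Leftrightarrow d(g)=r(h)$ from (G1)--(G4), via the intermediate facts that inverses are unique and that $d(g)$, $r(g)$ are idempotent and self-inverse, and only then verify (i)--(iv) uniformly. Your steps all check out: in particular the collapse of $d(g)r(h)=r(h)$ to $d(g)=r(h)$ via (G4) and self-inverseness is sound, as is the backward half via (G2) on $(g,hh^{-1})$ and $(hh^{-1},h)$. What your route buys is twofold: the work is concentrated in one reusable lemma (the Renault-side analog of Lemma \ref{l1}), after which every axiom is bookkeeping; and you actually verify that $d(g)$ and $r(g)$ satisfy the identity condition \eqref{iden} and are the unique such identities, a point the paper's proof of (iii)--(iv) passes over in silence. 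The cost is length: the paper's direct juggling disposes of (i) and (ii) more quickly, at the price of repeating ad hoc manipulations and leaving the identity and uniqueness checks implicit.
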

\begin{proof} Let $\mathcal{G}^2=\{ (g,h)\in \mathcal{G}\times \mathcal{G}\mid \exists gh\}$. By using (iv) of Definition \ref{d2}, we define $\G\ni g \mapsto g^{-1}\in \G$. Then, by Lemma \ref{unique} this map is well defined.  We shall check  ($G1$)-($G4$) of Definition \ref{re}.

\noindent ($G1$) It is the second assumption in Lemma \ref{unique}.

\noindent ($G2$) If $(g,h), (h,l)\in \G^2,$ then $\exists gh$ and $\exists hl$. By (i) and (ii),  $\exists (gh)l$ and $\exists g(hl),$ that means $(gh,l), (g,hl)\in \G^2$ and $(gh)l=g(hl).$

\noindent ($G3$)  By item (iv),  we get that $(g^{-1}, g)\in\G^2 .$  Let $h\in \G$ with $(g,h)\in \G^2$.   By Lemma \ref{l1}, we get that $d(g)=r(h)$ and by using (iii)  we obtain $g^{-1}(gh)=h.$

\noindent ($G4$) This is proved analogously to the previous item.

Conversely, suppose that $\G$ is a set. We define a partial binary operation on $\G$ by $\exists gh,$ if and only if, $(g,h)\in \G^2$ and $gh=m(g,h).$ We shall check that  properties (i)-(iv) in Definition \ref{d2} hold.

\noindent (i) Let $g,h,l\in \G$ such that $\exists g(hl)$. Then $(g,hl), (h,l)\in \mathcal{G}^2$ and by ($G4$), $(l,l^{-1})\in \G^2$  and $h=(hl)l^{-1}.$ Thus, $(g,hl), (hl,l^{-1})\in \mathcal{G}^2 $ and by ($G2$), \linebreak $(g(hl),l^{-1})\in \mathcal{G}^2 $ and $[g(hl)]l^{-1}=g[(hl)l^{-1}]=gh$. In particular, $(g,h) \in \mathcal{G}^2 .$  We conclude that $(g,h), (h,l)\in \mathcal{G}^2$ and by using ($G2$), we  get that $(gh)l=g(hl).$ Conversely, suppose that $\exists (gh)l$. Then, $(g,h), (gh,l)\in \mathcal{G}^2$ and by ($G3$), we have that $(g^{-1}, g)\in \G^2$ and $g^{-1}(gh)=h$. Thus, $(g^{-1},gh), (gh,l)\in \mathcal{G}^2$ and by ($G2$), $(h,l)\in \G^2$. Finally, since $(g,h) \in \mathcal{G}^2 $ we obtain, again by ($G2$), that $(g,hl)\in \G^2$. Hence, $\exists g(hl).$

\noindent (ii) This is shown analogously to the previous items.

\noindent (iii) and (iv) If $g\in \G$, then $(g^{-1}, g), ( g, g^{-1})\in \G^2$. Thus, we set $r(g)=gg^{-1}$ and $d(g)=g^{-1}g$. Hence, by ($G2$), ($G3$) and  ($G4$), $\exists gd(g), \exists d(g)g$ and the equalities $gd(g)=g=r(g)(g)$ hold.
\end{proof}

\begin{rem} The interested reader can find another two equivalent definitions of groupoids in \cite{RA}  and \cite{We}.
\end{rem}
\begin{center}{\it From now on in this work $\G$ denotes a groupoid.}\end{center}

For the sake of completeness, we  give the proof of some known consequences of Definition \ref{d2}.

\begin{prop}\label{t1}(\cite{F}, Lema 1.1.4)
For each $g,h, k, l\in \G$ we have:
\begin{enumerate}

\item[(i)] If $\exists gh,$ then  $d(gh)=d(h)$ and $r(gh)=r(g)$.
\item[(ii)] $\exists gh$, if and only if, $\exists h^{-1}g^{-1}$ and in this case $(gh)^{-1}=h^{-1}g^{-1}$.

\end{enumerate}
\end{prop}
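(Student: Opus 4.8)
The plan is to derive both parts directly from the axioms of Definition \ref{d2}, leaning on Lemma \ref{l1} (which characterizes $\exists gh$ by $d(g)=r(h)$) and on the uniqueness of inverses from Lemma \ref{unique}. A useful preliminary observation for part (ii) is the pair of identities $d(g^{-1})=r(g)$ and $r(g^{-1})=d(g)$, both immediate from $d(g)=g^{-1}g$, $r(g)=gg^{-1}$ and $(g^{-1})^{-1}=g$.

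For (i), I would invoke the uniqueness of $d(gh)$ and $r(gh)$ from axiom (iii). To get $d(gh)=d(h)$, I exhibit $d(h)$ as a right identity for $gh$: since $\exists gh$ and $\exists h\,d(h)$, axiom (ii) gives $\exists g(h\,d(h))$, and then axiom (i) yields $\exists (gh)d(h)$ with $(gh)d(h)=g(h\,d(h))=gh$. As $d(h)$ is an identity composable on the right with $gh$, uniqueness forces $d(gh)=d(h)$. Symmetrically, from $\exists r(g)g$ and $\exists gh$ axiom (ii) gives $\exists r(g)(gh)$, so $r(g)$ is an identity composable on the left with $gh$, whence $r(gh)=r(g)$.

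For the existence equivalence in (ii), I would compare the two criteria supplied by Lemma \ref{l1}: $\exists gh$ iff $d(g)=r(h)$, while $\exists h^{-1}g^{-1}$ iff $d(h^{-1})=r(g^{-1})$. By the preliminary identities, $d(h^{-1})=r(h)$ and $r(g^{-1})=d(g)$, so the second criterion reads $r(h)=d(g)$, which is exactly the first; hence the two products exist simultaneously. For the formula itself I would verify that $h^{-1}g^{-1}$ meets the defining conditions of the inverse of $gh$ and then appeal to Lemma \ref{unique}. Rearranging by associativity, $g^{-1}(gh)=(g^{-1}g)h=d(g)h=r(h)h=h$, so $(h^{-1}g^{-1})(gh)=h^{-1}h=d(h)=d(gh)$; dually $(gh)h^{-1}=g(hh^{-1})=g\,r(h)=g\,d(g)=g$, so $(gh)(h^{-1}g^{-1})=gg^{-1}=r(g)=r(gh)$. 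Together with part (i) these show $h^{-1}g^{-1}$ is an inverse of $gh$, and uniqueness gives $(gh)^{-1}=h^{-1}g^{-1}$.

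The conceptual content is light; the genuine care lies in the domain bookkeeping, since the multiplication is only partially defined. Every regrouping of the form $(ab)c=a(bc)$ must first be licensed by axioms (i) and (ii) (and, where convenient, Lemma \ref{l1}) before the value is manipulated. I expect this tracking of which products are defined, rather than any single algebraic identity, to be the main point requiring attention.
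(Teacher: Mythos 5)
Your proposal is correct and follows essentially the same route as the paper: part (i) is proved by exhibiting $d(h)$ and $r(g)$ as identities composable with $gh$ and invoking the uniqueness in axiom (iii), and part (ii) uses the identity $d(l)=r(l^{-1})$ together with Lemma \ref{l1} for the existence equivalence, then verifies the inverse conditions for $h^{-1}g^{-1}$ and appeals to uniqueness of inverses. The only difference is that you are somewhat more explicit about which products are defined at each regrouping, which the paper leaves implicit.
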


\begin{proof}

(i) For the first equality, we prove that $d(h)$ satisfy the axiom (iii) from Definition \ref{d2}. Indeed, assume that $\exists gh$. Then $\exists (gh)d(h)$, $d(g)=r(h)$  and
$$(gh)d(h)=g(hd(h))=gh.$$
In a similar way, it is possible to show that $r(gh)=r(g)$.\\

(ii) We have that $\exists gh$, if and only if, $d(g)=r(h)$. Notice that for any $l\in \G$ we have that  \begin{equation}\label{dgrg}
d(l)=l^{-1}l=l^{-1}(l^{-1})^{-1}=r(l^{-1}).\end{equation}

Then $d(h^{-1})=r(g^{-1})$. That is, $\exists h^{-1}g^{-1}$. Furthermore,

\begin{align*}gh(h^{-1}g^{-1})=g(hh^{-1})g^{-1}=gr(h)g^{-1}=gd(g)g^{-1}=gg^{-1}=r(g)=r(gh)\end{align*} and \begin{align*}(h^{-1}g^{-1})gh=h^{-1}(g^{-1}g)h=h^{-1}d(g)h=h^{-1}r(h)h=h^{-1}h=d(h)=d(gh).\end{align*}
Therefore, by the uniqueness of the inverse element we get that $(gh)^{-1}=h^{-1}g^{-1}$.
\end{proof}

The following statements also follow from the definition of groupoid.

\begin{prop}\label{propiedades de d y r}
Let $g\in \G$. Then the following statements hold:
\begin{flalign*}
 &(i)\,  d(g)=r(g^{-1}) & (ii)& \, d(d(g))=d(g) & (iii)& \, r(r(g))=r(g)\\
 &(iv)\,  d(r(g))=r(g) &  (v)& \, r(d(g))=d(g). &
\end{flalign*}
\end{prop}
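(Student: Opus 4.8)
The plan is to reduce all five identities to two facts that are already in hand: equation \eqref{dgrg}, which gives $d(l)=r(l^{-1})$ for every $l\in\G$, and Proposition \ref{t1}(i), which asserts that $\exists gh$ forces $d(gh)=d(h)$ and $r(gh)=r(g)$. With these, no genuinely new argument is required; the work is just bookkeeping about which of $d,r$ appears where.

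First I would dispose of (i), since it is nothing more than \eqref{dgrg} specialized to $l=g$. For (ii) and (iii) I would lean on the basic identities $g\,d(g)=g$ and $r(g)\,g=g$. These follow from axioms (iii)--(iv): writing $d(g)=g^{-1}g$ gives $g\,d(g)=(gg^{-1})g=r(g)\,g=g$, and symmetrically $r(g)=gg^{-1}$ gives $r(g)\,g=g$. Because $\exists g\,d(g)$ holds, Proposition \ref{t1}(i) yields $d(d(g))=d\bigl(g\,d(g)\bigr)=d(g)$, which is (ii); likewise, from $\exists r(g)\,g$ we get $r(r(g))=r\bigl(r(g)\,g\bigr)=r(g)$, which is (iii).

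For (iv) and (v) I would combine the same proposition with part (i). Writing $r(g)=gg^{-1}$ and using Proposition \ref{t1}(i), I obtain $d(r(g))=d(gg^{-1})=d(g^{-1})$; then \eqref{dgrg} applied to $g^{-1}$ turns $d(g^{-1})$ into $r\bigl((g^{-1})^{-1}\bigr)=r(g)$ via Lemma \ref{unique}, giving (iv). Running the mirror-image computation with $d(g)=g^{-1}g$ and $r$ in place of $d$ produces $r(d(g))=r(g^{-1}g)=r(g^{-1})=d(g)$, which is (v). The only point demanding any care is confirming the two auxiliary equalities $g\,d(g)=g$ and $r(g)\,g=g$ and keeping track of the interchange of $d$ and $r$ under inversion; beyond that there is no real obstacle, as each part is a one-line application of the cited results.
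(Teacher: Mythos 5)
Your proposal is correct and follows essentially the same route as the paper: part (i) is read off from \eqref{dgrg}, and the remaining parts are one-line applications of Proposition \ref{t1}(i) together with the interchange of $d$ and $r$ under inversion. The only cosmetic difference is that for (ii) and (iii) you apply Proposition \ref{t1}(i) to the products $g\,d(g)=g$ and $r(g)\,g=g$, whereas the paper applies it directly to $d(g)=g^{-1}g$ and $r(g)=gg^{-1}$; both reductions are immediate and equivalent.
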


\begin{proof}
  \begin{enumerate}
    \item[(i)] This is \eqref{dgrg}.
    \item[(ii)] $d(d(g))=d(g^{-1}g)=d(g),$ where the last equality follows from (i) of Proposition \ref{t1}.
    \item[(iii)] $r(r(g))=r(gg^{-1})=r(g),$  where the last equality  also follows from (i) of Proposition \ref{t1}.

  \end{enumerate}
Items (iv) and (v) are proved analogously. \end{proof}

\begin{rem} Let $\G$ be a  groupoid. In \cite[p. 3660]{BP}, Bagio and Paques called an element $e\in \mathcal{G}$ an $\textit{identity}$ if $e=d(g)$, for some $g\in \mathcal{G}$.

\end{rem}
\begin{prop}\label{equiv} Let $\G$ be a  groupoid. An element  $e$ of $\G$
is an identity in the sense of Bagio and Paques, if and only if, it satisfies \eqref{iden}.
\end{prop}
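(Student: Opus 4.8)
The plan is to prove the two implications separately, and the key observation for the nontrivial direction is that an element satisfying \eqref{iden} is fixed by the map $d$.

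First I would dispatch the easy implication. Suppose $e$ is an identity in the sense of Bagio and Paques, so that $e=d(g)$ for some $g\in\G$. By axiom (iii) of Definition \ref{d2}, the element $d(g)$ is, \emph{by definition}, one of the (unique) identities attached to $g$, i.e.\ it satisfies \eqref{iden}; hence so does $e$. No computation is needed here, since this direction is immediate from the wording of axiom (iii).

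For the converse, assume $e$ satisfies \eqref{iden}. I would exhibit an explicit element $g$ with $e=d(g)$, the natural candidate being $g=e$, so the goal reduces to showing $d(e)=e$. By axiom (iii) the product $e\,d(e)$ is defined, and I would evaluate it in two ways. On one hand, $d(e)$ is an identity, so the second clause of \eqref{iden}, applied with right factor $d(e)$ and left factor $e$, gives $e\,d(e)=e$. On the other hand, $e$ itself satisfies \eqref{iden}, so the first clause, applied with left factor $e$ and right factor $d(e)$, gives $e\,d(e)=d(e)$. Comparing the two evaluations yields $e=d(e)$, which exhibits $e=d(e)$ as a Bagio--Paques identity.

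I do not anticipate any real obstacle: the forward implication is purely definitional, and the converse hinges only on the small trick of computing $e\,d(e)$ through both clauses of \eqref{iden}. The one point to state carefully is that axiom (iii) simultaneously guarantees that $\exists\,e\,d(e)$ and that $d(e)$ is an identity, which is precisely what legitimizes the double evaluation; everything else is a one-line comparison.
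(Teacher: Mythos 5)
Your proof is correct. For the converse you use exactly the paper's argument: axiom (iii) of Definition \ref{d2} guarantees $\exists\, e\,d(e)$, the clause of \eqref{iden} for the identity $d(e)$ gives $e\,d(e)=e$, the clause for $e$ itself gives $e\,d(e)=d(e)$, and comparing yields $e=d(e)$. The only divergence is in the forward direction: you observe that it is purely definitional, since axiom (iii) literally asserts that $d(g)$ is an \emph{identity} in the sense just defined by \eqref{iden}, so $e=d(g)$ satisfies \eqref{iden} with nothing to check. This reading is legitimate and shorter than what the paper does; the authors instead verify the two clauses of \eqref{iden} explicitly, using Lemma \ref{l1} and Proposition \ref{propiedades de d y r} to compute $d(g')=r(e)=e=d(e)=r(g)$ and hence $eg=r(g)g=g$ and $g'e=g'd(g')=g'$. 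What their longer route buys is the byproduct identities $d(e)=e=r(e)$ for every $e\in\G_0$, which are then cited in Remark \ref{propiedades de las identidades}; your version still recovers $d(e)=e$ from the converse, so nothing essential is lost, but if you adopt the short forward argument you should note separately that $r(e)=e$ also holds (by the symmetric double evaluation of $r(e)\,e$) so that the remark remains supported.
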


\begin{proof}
Suppose that $e=d(h)$ is an identity in the sense of Bagio and Paques, for some $h\in \mathcal{G}$. By (i) of Proposition  \ref{propiedades de d y r}, $e=d(h)=r(h^{-1}).$ Now, let $g,g'\in \G$ such that $\exists eg$ and $\exists g'e.$ By Lemma \ref{l1} and (ii) -- (v) of Proposition  \ref{propiedades de d y r}, we have that $d(g')=r(e)=e=d(e)=r(g)$, then
$eg=r(g)g=g$ and $g'e=g'd(g')=e$. Therefore, $e$ satisfies \eqref{iden}.

Conversely,  suppose that $e \in\mathcal{G}$ satisfies \eqref{iden}. By (iii) of Definition \ref{d2}, we get $\exists ed(e)$ and  $e=ed(e).$ Thus $e=d(e)$, and it follows that $e$ is an identity in the sense of Bagio and Paques.
\end{proof}

\begin{rem}\label{propiedades de las identidades} It follows from the proof of Proposition \ref{equiv}, that $d(e)=e=r(e)$,
  $\exists ee$, and $ee=e=e^{-1}$ for any  $e\in \G_0$. Moreover, note that the elements of $\mathcal{G}_0$ are the unique idempotents of $\mathcal{G}$. In fact, if $\exists g^2$ and $g^2=g$, then $(gg)g^{-1}=gg^{-1}$ and so $gr(g)=gg^{-1}=r(g)$. Since $d(g)=r(g)$, it follows that $g=gd(g)=gr(g)=r(g)$.
\end{rem}

\begin{prop}
  Let  $e\in \G_0$. Then, the set $\G_e=\{g\in G\mid d(g)=r(g)=e\}$ is a group.
\end{prop}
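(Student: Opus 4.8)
The plan is to verify that the partial multiplication of $\G$, when restricted to $\G_e$, is everywhere defined and endows $\G_e$ with the structure of a group. Concretely, I would check four things in order: that $\G_e$ is closed under the product and that the product of any two of its elements is always defined; that the operation is associative; that $e$ is a two-sided identity; and that each $g\in\G_e$ has its groupoid inverse $g^{-1}$ lying in $\G_e$. All of these are inherited from the ambient groupoid once the first point is secured.

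The only step requiring real care is closure, which I regard as the main obstacle. Given $g,h\in\G_e$ we have $d(g)=r(g)=e$ and $d(h)=r(h)=e$; in particular $d(g)=e=r(h)$, so Lemma \ref{l1} guarantees that $\exists gh$. This is exactly the point where the partial operation becomes total on $\G_e$: the defining condition $d(g)=r(h)=e$ forces every product to exist. To see that the result stays inside $\G_e$, I would invoke part (i) of Proposition \ref{t1}, which gives $d(gh)=d(h)=e$ and $r(gh)=r(g)=e$, so $gh\in\G_e$.

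With closure established, the remaining axioms are immediate. Associativity for any $g,h,l\in\G_e$ follows from axiom (i) of Definition \ref{d2}, since all the relevant products are defined. For the identity, Remark \ref{propiedades de las identidades} gives $d(e)=e=r(e)$, so $e\in\G_e$, and for $g\in\G_e$ the relations $d(g)=r(g)=e$ combined with axiom (iii) of Definition \ref{d2} yield $ge=gd(g)=g$ and $eg=r(g)g=g$. Finally, for inverses I would apply part (i) of Proposition \ref{propiedades de d y r} (the identity $d(x)=r(x^{-1})$) to $x=g$ and to $x=g^{-1}$, obtaining $r(g^{-1})=d(g)=e$ and $d(g^{-1})=r((g^{-1})^{-1})=r(g)=e$, so that $g^{-1}\in\G_e$; axiom (iv) of Definition \ref{d2} then gives $gg^{-1}=r(g)=e$ and $g^{-1}g=d(g)=e$. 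Hence $\G_e$ is a group with identity $e$ and inversion inherited from $\G$.
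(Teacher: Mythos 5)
Your proposal is correct and follows essentially the same route as the paper's proof: closure via Lemma \ref{l1} and Proposition \ref{t1}(i), the identity $e$ via Remark \ref{propiedades de las identidades} and $gd(g)=g=r(g)g$, and inverses via Proposition \ref{propiedades de d y r}(i). The only cosmetic difference is that you make associativity explicit, which the paper leaves implicit.
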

\begin{proof}
By Remark \ref{propiedades de las identidades}, we have that  $d(e)=r(e)=e.$ Thus $e\in \G_e.$  If $g,h\in \G_e$, then $d(g)=e=r(h)$, and so $\exists gh$ thanks to Lemma \ref{l1}. Now, (i) of Proposition \ref {t1}, implies that $d(gh)=d(h)=e$ and $r(gh)=r(g)=e$. Hence, $gh\in \G_e$. If $g\in \G_e$, then  by Lemma \ref{l1}, $\exists ge$ and $\exists eg$ and we have that   $ge=gd(g)=g$ and $eg=r(g)g=g$. Therefore, $e$ is the identity element of $\G_e$. Finally, let $g\in \G_e$. By Proposition \ref{propiedades de d y r}, $d(g^{-1})=r(g)=e$ and $r(g^{-1})=d(g)=e$. Hence, $g^{-1}\in \G_e$, $gg^{-1}=g^{-1}g=e,$ and we conclude that $\G_e$ is a group.
\end{proof}

\begin{defn}The group $\mathcal{G}_e$ is called the isotropy group associated to $e.$ The isotropy subgroupoid (see Definition \ref{subg}) or the group bundle associated to $\G$ is defined by the disjoint union ${\rm Iso}(\G)=\bigcup_{e\in \G_0}\G_e.$\end{defn}

\begin{rem}\label{abelian}
A concept of abelian groupoid was presented in \cite[p.111]{PT} as follows:
A groupoid  $\mathcal{G}$ is  abelian if $d(g)=r(g)$ for each $g\in \mathcal{G}$; and $gh=hg$ for all $g,h \in \mathcal{G}$ with $d(g)=r(h)$.
\end{rem}

We have the following.
\begin{prop}\label{abequiv} A groupoid $\G$ is abelian in the sense of Paques and Tamusiunas, if and only if, $\G={\rm Iso}(\G)$ and $\G_e$ is abelian for all $e\in \G_0.$
\end{prop}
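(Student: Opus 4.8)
The plan is to prove both implications by reducing, in each case, the Paques--Tamusiunas commutativity hypothesis to a statement about a single isotropy group. The bridge between the two formulations is the elementary observation that the condition ``$d(g)=r(g)$ for all $g\in\G$'' is \emph{exactly} equivalent to $\G={\rm Iso}(\G)$: indeed, $g\in{\rm Iso}(\G)=\bigcup_{e\in\G_0}\G_e$ means that $d(g)=r(g)=e$ for some $e\in\G_0$, which holds if and only if $d(g)=r(g)$ (and then automatically $e=d(g)$). Thus $\G={\rm Iso}(\G)$ if and only if $d(g)=r(g)$ for every $g$. I would establish this equivalence first and use it in both directions.

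For the forward implication, I would assume $\G$ is abelian in the sense of Paques and Tamusiunas. The first half of that definition gives $d(g)=r(g)$ for all $g$, hence $\G={\rm Iso}(\G)$ by the remark above. To see that each $\G_e$ is abelian, I take $g,h\in\G_e$, so that $d(g)=r(g)=d(h)=r(h)=e$. In particular $d(g)=r(h)$, so the second half of the definition applies and yields $gh=hg$; since $\G_e$ is a group, this shows it is abelian.

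For the converse, I would assume $\G={\rm Iso}(\G)$ and that every $\G_e$ is abelian. Then $d(g)=r(g)$ holds for all $g$, which is the first required condition. For the second, suppose $d(g)=r(h)$. Here is the key step: combining $d(g)=r(g)$, $d(h)=r(h)$ and $d(g)=r(h)$ one obtains $r(g)=d(g)=r(h)=d(h)$, so all four identities coincide with a common $e\in\G_0$, whence $g,h\in\G_e$. By Lemma \ref{l1} the product $gh$ is defined (and likewise $hg$), and abelianness of the group $\G_e$ gives $gh=hg$. Therefore $\G$ is abelian in the sense of Paques and Tamusiunas.

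The computations here are routine; the only point that needs genuine attention — the main obstacle, such as it is — is recognizing that the Paques--Tamusiunas commutativity clause, although stated for pairs satisfying merely $d(g)=r(h)$, in fact only ever constrains pairs lying in a common isotropy group once $d=r$ holds throughout. This is precisely what makes the purely group-theoretic requirement on each $\G_e$ sufficient, and it is encapsulated in the chain of equalities $r(g)=d(g)=r(h)=d(h)$ invoked in the converse.
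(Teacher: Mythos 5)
Your proof is correct. Note that the paper states Proposition \ref{abequiv} without any proof, so there is nothing to compare against; your argument — observing that $\G={\rm Iso}(\G)$ is equivalent to $d(g)=r(g)$ for all $g$, and that under this condition the hypothesis $d(g)=r(h)$ forces $g$ and $h$ into a common isotropy group $\G_e$ — is exactly the routine verification the authors evidently had in mind, and it supplies the omitted details cleanly.
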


In the light of Proposition \ref{abequiv}, we prefer to use the following definition of abelian groupoid.
\begin{defn}\label{ab}\cite[Definition 1.1]{MA} A groupoid $\G$ is called abelian if all its isotropy groups are abelian.
\end{defn}

Note that  if $\G$ is abelian in the sense of Paques and Tamusiunas, then it is abelian in the sense of  Definition \ref{ab}. Now, consider the groupoid $\G=\{g,g^{-1}, d(g), r(g)\}$ with $d(g)\neq r(g)$. Then, we have that $\mathcal{G}_{d(g)}=\{d(g)\}$ and $\mathcal{G}_{r(g)}=\{r(g)\}$. That is, $\mathcal{G}$ is an abelian groupoid in the sense of Definition \ref{ab}, but it is not a union of abelian groups.

\section{Normal subgroupoids, the quotient groupoid and homomorphisms}
In this section, we present a theory of substructures in a groupoid.
We follow the definition of subgroupoid given in \cite{PT}.

\begin{defn}\label{subg}
Let $\mathcal{G}$ be a groupoid and $\mathcal{H}$ a nonempty subset of $\mathcal{G}$.  $\mathcal{H}$ is said to be a subgroupoid of $\mathcal{G}$ if it satisfies: for all $g,h \in \mathcal{H}$,
\begin{enumerate}
\item[(i)] $g^{-1}\in \mathcal{H}$;
\item[(ii)] If $\exists gh$, then $gh\in \mathcal{H}$.
\end{enumerate}
If $\mathcal{H}$ is a subgroupoid of $\G,$ then it is called wide if $\mathcal{H}_0=\mathcal{G}_0$.

\end{defn}
\begin{rem} It is clear that if  $\mathcal{H}$ is a subgroupoid of $\G$, then it is a groupoid with the product  \eqref{prodg}, restricted to $\mathcal{H}^2=(\mathcal{H}\times \mathcal{H}) \cap \G^2.$
\end{rem}

\begin{exe} Let $\mathcal{G}$ be a groupoid.
\begin{enumerate}
  \item  Take $a\in \mathcal{G}$ such that $d(a)=r(a)$. The set $C(a)=\{g\in \mathcal{G}_{d(a)}\mid  ga=ag\}$ is a subgroupoid of $\mathcal{G}$. Indeed, first of all note that by assumption $a\in C(a)$. If $x,y\in C(a)$ then $x, y\in\mathcal{G}_{d(a)}$, $xa=ax$ and $ya=ay$. Since $\mathcal{G}_{d(a)}$ is a group,  then $\exists (xy)a$ and
$$(xy)a=x(ya)=x(ay)=(xa)y=(ax)y=a(xy).$$ That is, $xy\in C(a)$. If $x\in C(a)$, then $x^{-1}\in \mathcal{G}_{d(a)}$. Hence,  we have that $\exists x^{-1}a$ and $x^{-1}a=ax^{-1}$ since $ax=xa.$
 Observe that this example generalizes the concept of centralizer in groups.

  \item  Suppose that $\mathcal{G}$ is abelian  and $n>1$. Then the set $\mathcal{H}_n=\left\{a^n\mid a\in  {\rm Iso}(\G)\right\}$ is a subgroupoid of $\mathcal{G}$.
If $x,y\in \mathcal{H}_n$, then $x=a^n,y=b^n$ for some $a,b\in{\rm Iso}(\G)$. If $\exists xy$ then $\exists a^nb^n$, and this implies that $\exists ab$ and thus $a,b\in \G_e$ for some $e\in \G_0$. Then, $ab=ba$ and so $xy=(ab)^n\in \mathcal{H}$. Now, if $x\in \mathcal{H}$ then $x=a^n$ for some $a\in \mathcal{G}$. Thus, $x^{-1}=(a^n)^{-1}=(a^{-1})^n\in \mathcal{H}_n$. Finally, note that for $g\in \mathcal{G}$, $d(g)=d(g)^n\in \mathcal{H}_n$. Hence, $\mathcal{G}_0\subseteq \mathcal{H}$ and we conclude that $\mathcal{H}$ is wide.

  \item  Suppose that $\mathcal{G}$ is abelian. Then the set $Tor(\mathcal{G})=\{g\in  {\rm Iso}(\G)\mid g^n\in \mathcal{G}_0\textnormal{ for some }n\in \mathbb{N}\}$ is a wide subgroupoid of $\mathcal{G}$. First, it is clear that $\mathcal{G}_0\subseteq Tor(\mathcal{G})$. If $g,h\in Tor(\mathcal{G})$, then $g^n=e,h^k=f$ for some $n,k\in \mathbb{N}$ and some $e,f\in \mathcal{G}_0$. Thus, we obtain that $d(g)=d(g^n)=e=r(g^n)=r(g)$ and $d(h)=d(h^k)=f=r(h^k)=r(h)$. If $\exists gh$, then $d(g)=r(h)$ and thus $e=f$ and $gh=hg,$ since $\mathcal{G}$ is an abelian groupoid. Then $(gh)^{nk}=g^{nk}h^{nk}=(g^n)^k(h^k)^n=d(g)$, that is, $gh\in Tor(\mathcal{G})$. Now, since  $g^n=e$ we have $(g^{-1})^n=(g^n)^{-1}=e$ and hence $g^{-1}\in Tor(\mathcal{G})$. We conclude that $Tor(\mathcal{G})$ is a wide subgroupoid of $\mathcal{G}$. Note that if we take a fixed $n\in \mathbb{N}$ and define the set $\mathcal{D}_n=\{g\in \mathcal{G}\mid g^n\in \mathcal{G}_0\}$, then $\mathcal{D}_n$ is a wide subgroupoid of $\mathcal{G}$ and $\mathcal{D}_n\subseteq Tor(\mathcal{G})$. That is, $\mathcal{D}_n$ is a subgroupoid of $Tor(\mathcal{G})$. Observe that this example generalizes the concept of torsion subgroup in abelian groups.

\end{enumerate}
\end{exe}

\begin{prop}\label{producto HK=KH}
Let $\mathcal{G}$ be a groupoid and $\mathcal{H}$, $\mathcal{K}$  subgroupoids of $\mathcal{G}$.
Then:
\begin{enumerate}
\item [(i)] If $\mathcal{H}\mathcal{K}$ is non-empty, then $\mathcal{H}\mathcal{K}$ is a subgroupoid of $\mathcal{G}$,  if and only if, $\mathcal{H}\mathcal{K}=\mathcal{K}\mathcal{H}.$
\item [(ii)] If $\mathcal{H}$ and  $\mathcal{K}$ are wide and $\mathcal{H}\mathcal{K}$ is a subgroupoid,  then $\mathcal{H}\mathcal{K}$ is wide.
\end{enumerate}

\end{prop}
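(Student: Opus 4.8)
The plan is to mimic the classical group-theoretic argument that $HK$ is a subgroup if and only if $HK=KH$, while carefully tracking at every step which partial products are actually defined. Throughout I read $\mathcal{H}\mathcal{K}=\{hk \mid h\in\mathcal{H},\, k\in\mathcal{K},\, \exists hk\}$, and I record two facts that make the inverse bookkeeping painless: since $\mathcal{H},\mathcal{K}$ are subgroupoids they satisfy $\mathcal{H}^{-1}=\mathcal{H}$ and $\mathcal{K}^{-1}=\mathcal{K}$, and by Proposition \ref{t1}(ii) we have $\exists hk$ iff $\exists k^{-1}h^{-1}$ with $(hk)^{-1}=k^{-1}h^{-1}$. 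Since inversion is an involution and ranges bijectively over $\mathcal{H}$ and $\mathcal{K}$, this yields $(\mathcal{H}\mathcal{K})^{-1}=\mathcal{K}\mathcal{H}$ as sets.

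For (i), the forward implication is immediate from this last observation: if $\mathcal{H}\mathcal{K}$ is a subgroupoid then it is closed under inversion, so $\mathcal{H}\mathcal{K}=(\mathcal{H}\mathcal{K})^{-1}=\mathcal{K}\mathcal{H}$. For the converse I assume $\mathcal{H}\mathcal{K}=\mathcal{K}\mathcal{H}$ (and nonempty) and verify the two axioms of Definition \ref{subg}. Closure under inverses is again $(\mathcal{H}\mathcal{K})^{-1}=\mathcal{K}\mathcal{H}=\mathcal{H}\mathcal{K}$. For closure under defined products I take $x=h_1k_1$ and $y=h_2k_2$ in $\mathcal{H}\mathcal{K}$ with $\exists xy$, and rewrite $xy$ by the usual trick: using Proposition \ref{t1}(i) we get $d(x)=d(k_1)$ and $r(y)=r(h_2)$, so $\exists xy$ forces $\exists k_1h_2$ by Lemma \ref{l1}; hence $k_1h_2\in\mathcal{K}\mathcal{H}=\mathcal{H}\mathcal{K}$, say $k_1h_2=h_3k_3$, and I claim $xy=(h_1h_3)(k_3k_2)$.

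The main obstacle — and the only place the groupoid setting really differs from the group case — is checking that each product appearing in $(h_1h_3)(k_3k_2)$ is defined before invoking associativity. I would handle this by comparing source and range maps. From $k_1h_2=h_3k_3$ together with Proposition \ref{t1}(i) one reads off $r(h_3)=r(k_1h_2)=r(k_1)$ and $d(k_3)=d(k_1h_2)=d(h_2)$; combining with $d(h_1)=r(k_1)$ (from $\exists h_1k_1$) and $r(k_2)=d(h_2)$ (from $\exists h_2k_2$) gives $d(h_1)=r(h_3)$ and $d(k_3)=r(k_2)$, so $\exists h_1h_3$ and $\exists k_3k_2$ by Lemma \ref{l1}, with $h_1h_3\in\mathcal{H}$ and $k_3k_2\in\mathcal{K}$. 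A final check using $d(h_3)=r(k_3)$ (from $\exists h_3k_3$) yields $\exists(h_1h_3)(k_3k_2)$. Since all consecutive products in the string $h_1k_1h_2k_2$ are now defined, associativity (axiom (i) of Definition \ref{d2}) lets me re-bracket freely and conclude $xy=(h_1h_3)(k_3k_2)\in\mathcal{H}\mathcal{K}$.

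For (ii) I would show $(\mathcal{H}\mathcal{K})_0=\mathcal{G}_0$. The inclusion $(\mathcal{H}\mathcal{K})_0\subseteq\mathcal{G}_0$ holds for any subgroupoid, since the identities of a subgroupoid are identities of $\mathcal{G}$. For the reverse, given $e\in\mathcal{G}_0$, wideness of $\mathcal{H}$ and $\mathcal{K}$ gives $e\in\mathcal{H}_0\subseteq\mathcal{H}$ and $e\in\mathcal{K}_0\subseteq\mathcal{K}$; since $\exists ee$ and $ee=e$ by Remark \ref{propiedades de las identidades}, we get $e=ee\in\mathcal{H}\mathcal{K}$, and as $e$ is an idempotent lying in the subgroupoid $\mathcal{H}\mathcal{K}$ it is an identity of $\mathcal{H}\mathcal{K}$, i.e. $e\in(\mathcal{H}\mathcal{K})_0$. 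Hence $\mathcal{G}_0\subseteq(\mathcal{H}\mathcal{K})_0$ and $\mathcal{H}\mathcal{K}$ is wide.
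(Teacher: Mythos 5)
Your proof is correct and follows essentially the same route as the paper, which simply states that (i) is ``similar to the group case'' and proves (ii) by the identical observation $e=ee\in\mathcal{H}\mathcal{K}$. You have merely supplied the detailed bookkeeping of defined products (via Lemma \ref{l1} and Proposition \ref{t1}) that the paper leaves to the reader, and those details check out.
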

\begin{proof} The proof of (i) is similar to the group case. To prove  (ii), it is enough to observe that  $\G_0=\mathcal{H}_0=\mathcal{K}_0$ and if $e\in \G_0$, then $e=ee\in \mathcal{H}\mathcal{K}.$\end{proof}

Now, we present the notion of \textbf{normal subgroupoid} and prove several properties of them, which generalize well-known results in group theory. We follow the definition given in \cite{PT}.
\begin{defn}
Let $\mathcal{G}$ be a groupoid. The subgroupoid $\mathcal{H}$ of $\mathcal{G}$
is said to be normal, denoted by $\mathcal{H}\lhd \mathcal{G}$, if $g^{-1}\mathcal{H}g\neq \emptyset$ and $g^{-1}\mathcal{H}g\subseteq \mathcal{H},$ for all $g\in \mathcal{G}$. Where $g^{-1}\mathcal{H}g=\{g^{-1}hg\mid h\in \mathcal{H}\cap \G_{r(g)}\}.$
\end{defn}

\begin{rem}\label{carnor} By the proof of \cite[Lemma 3.1]{PT} one has that $g^{-1}\mathcal{H}g\neq \emptyset,$ if and only if, $\mathcal{H}$ is wide. Also the assertion $g^{-1}\mathcal{H}g\subseteq \mathcal{H}$ is equivalent to $g^{-1}\mathcal{H}_{r(g)}g=\mathcal{H}_{d(g)},$ for all $g\in \G.$
\end{rem}

Several examples of normal groupoids are presented in \cite[p.110 -111]{PT}.

Given  a wide subgroupoid  $\mathcal{H}$  of $\mathcal{G}$, in \cite{PT} Paques and Tamusiunas define a relation on $\mathcal{G}$ as follows:  for every $g,l \in \mathcal{G},$

$$ g\equiv_{\mathcal{H}}l \Longleftrightarrow (\exists l^{-1}g\,\,\,\,\text{ and} \,\,\,\,l^{-1}g\in \mathcal{H}).$$
 Furthermore, they prove that this relation is a congruence, which is an equivalence relation that is compatible with products. The equivalence class of $\equiv_{\mathcal{H}}$ containing $g$, is the set $g\mathcal{H}=\{gh\mid h\in \mathcal{H} \, \land \, r(h)=d(g)\}$. This set is called the left coset of $\mathcal{H}$ in $\mathcal{G}$ containing $g$. Then we have the following.

\begin{prop}\cite[Lemma 3.12]{PT}\label{quotient groupoid}
Let $\mathcal{H}$ be a normal subgroupoid of $\mathcal{G}$ and let $\mathcal{G/H}$ be the set of all left cosets of $\mathcal{H}$ in $\mathcal{G}$. Then $\mathcal{G/H}$ is a groupoid such that $\exists (g\mathcal{H})(l\mathcal{H})$, if and only if, $\exists gl$ and the partial binary operation is given by $(g\mathcal{H})(l\mathcal{H})=gl\mathcal{H}$.
\end{prop}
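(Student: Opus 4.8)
The plan is to put on the set $\G/\Hh$ of left cosets exactly the data required by Definition \ref{re}: declare a pair $(g\Hh,l\Hh)$ composable precisely when $\exists gl$, set $(g\Hh)(l\Hh)=gl\Hh$, and define the inverse by $(g\Hh)^{-1}=g^{-1}\Hh$. Once these operations are shown to be well defined, I would verify (G1)--(G4) by transporting each axiom from $\G$ to $\G/\Hh$ along the projection $g\mapsto g\Hh$. So the proof splits into two clearly separated parts: well-definedness (the substantive part, where normality of $\Hh$ is used), followed by a routine lift of the four axioms.

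The main obstacle is well-definedness, both of the domain of the operation and of its value, and this is precisely where normality enters. For the value, suppose $g\Hh=g'\Hh$ and $l\Hh=l'\Hh$, i.e. $g\equiv_\Hh g'$ and $l\equiv_\Hh l'$, and suppose both $\exists gl$ and $\exists g'l'$; I would show $gl\equiv_\Hh g'l'$, which amounts to checking that $(g'l')^{-1}(gl)$ lies in $\Hh$. Using Proposition \ref{t1} and writing $a=g'^{-1}g\in\Hh$ and $b=l'^{-1}l\in\Hh$, one gets $(g'l')^{-1}(gl)=l'^{-1}g'^{-1}gl=(l'^{-1}a\,l')\,b$. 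The key point is that composability pins down the types: from $\exists gl$ and $\exists g'l'$ together with $r(l)=r(l')$, Lemma \ref{l1} forces $d(a)=d(g)=r(l')=d(g')=r(a)$, so $a$ is an isotropy element of $\Hh$ at $r(l')$, i.e. $a\in\Hh_{r(l')}$. Normality, in the form $l'^{-1}\Hh_{r(l')}l'=\Hh_{d(l')}$ from Remark \ref{carnor}, then gives $l'^{-1}a\,l'\in\Hh_{d(l')}$, and since $\Hh$ is a subgroupoid closed under defined products, $(l'^{-1}a\,l')\,b\in\Hh$. This is exactly the statement that $\equiv_\Hh$ is a congruence, so the value $gl\Hh$ is independent of the representatives; the independence of the composability condition and of the inverse $g^{-1}\Hh$ follow from the same congruence property. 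I expect this representative-independence to be the heart of the argument, with everything else formal.

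With well-definedness secured, the axioms lift cleanly, each coset identity reducing to the corresponding identity in $\G$. The identities of $\G/\Hh$ are the cosets $e\Hh$ with $e\in\G_0$, and $r(g\Hh)=r(g)\Hh$ (with $d$ induced analogously). For (G1), $((g\Hh)^{-1})^{-1}=(g^{-1})^{-1}\Hh=g\Hh$ by (G1) in $\G$. For (G2), if $(g\Hh,l\Hh)$ and $(l\Hh,m\Hh)$ are composable then $\exists gl$ and $\exists lm$, so (G2) in $\G$ yields $\exists(gl)m$, $\exists g(lm)$ and $(gl)m=g(lm)$; projecting gives composability of the two coset triple products and $((g\Hh)(l\Hh))(m\Hh)=(g\Hh)((l\Hh)(m\Hh))$. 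For (G3), $(g^{-1},g)\in\G^2$ gives $(g^{-1}\Hh,g\Hh)\in(\G/\Hh)^2$, and whenever $\exists gl$ one has $g^{-1}(gl)=l$ by (G3) in $\G$, whence $(g^{-1}\Hh)((g\Hh)(l\Hh))=g^{-1}(gl)\Hh=l\Hh$; (G4) is symmetric. The required biconditional $\exists(g\Hh)(l\Hh)\iff\exists gl$ and the formula $(g\Hh)(l\Hh)=gl\Hh$ then hold by construction, so $\G/\Hh$ is a groupoid of the asserted form.
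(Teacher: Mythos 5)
The paper does not actually prove this statement --- it is imported wholesale from \cite[Lemma 3.12]{PT} --- so there is no in-house argument to compare yours against. Judged on its own terms, your verification that the \emph{value} of the product is independent of representatives is correct and complete: the typing argument that forces $a=g'^{-1}g$ into the isotropy group $\mathcal{H}_{r(l')}$, so that normality in the form of Remark \ref{carnor} can be applied, is precisely where the hypothesis enters, and the factorization $(g'l')^{-1}(gl)=(l'^{-1}al')b$ is valid.

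The gap is the clause asserting that independence of the composability condition and of the inverse ``follow from the same congruence property.'' They do not, and this is the genuinely delicate part of the statement. While $r$ is constant on left cosets ($r(gh)=r(g)$), $d$ is not: $d(gh)=d(h)$, and the paper's normality condition only constrains $\mathcal{H}\cap{\rm Iso}(\mathcal{G})$ (Remark \ref{carnor}), so a normal $\mathcal{H}$ may contain $h$ with $d(h)\neq r(h)$. For such an $h$ with $r(h)=d(g)$, the elements $g$ and $gh$ lie in the same coset, yet $\exists\, gl$ holds while $\exists\,(gh)l$ fails for every $l$ with $r(l)=d(g)$; moreover $(gh)^{-1}=h^{-1}g^{-1}$ has range $d(h)\neq d(g)=r(g^{-1})$, and since $\equiv_{\mathcal{H}}$-equivalent elements share the same range, $(gh)^{-1}$ is not equivalent to $g^{-1}$, so your inverse map is not well defined. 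Concretely, take $\mathcal{G}=\{1,2\}^{2}$ (the coarse groupoid) and $\mathcal{H}=\mathcal{G}$, which is normal because all isotropy groups are trivial: the two cosets $\{(1,1),(2,1)\}$ and $\{(1,2),(2,2)\}$ each contain one identity of $\mathcal{G}$, and the prescribed operation does not make them into a groupoid. To close the gap you must either impose the extra hypothesis $\mathcal{H}\subseteq{\rm Iso}(\mathcal{G})$, under which $d$ and $r$ are both constant on cosets and your lift of (G1)--(G4) goes through verbatim, or reinterpret composability existentially (the product of two cosets is declared defined when \emph{some} pair of representatives composes, your congruence computation then making the value unambiguous) and re-examine axioms (iii)--(iv) of Definition \ref{d2}, where the same difficulty reappears.
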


The groupoid $\mathcal{G/H}$ in Proposition \ref{quotient groupoid} is called the \textbf{quotient groupoid} of $\mathcal{G}$ by $\mathcal{H}$.

Now we present the notion of groupoid homomorphism and prove several properties of them, which generalize well-known results in homomorphisms of groups.

\begin{defn}
Let $\mathcal{G}$ and $\mathcal{G'}$ be groupoids. A map $\phi: \mathcal{G}\to \mathcal{G'}$ is called  groupoid homomorphism if for all $x,y\in \mathcal{G}$, $\exists xy$ implies that $\exists \phi(x)\phi(y)$, and in this case  $\phi(xy)=\phi(x)\phi(y)$.
\end{defn}

Notice that $j:\mathcal{G}\to \mathcal{G}/\mathcal{H}$ defined by $g\mapsto gH$ for all $g\in \mathcal{G}$, is a surjective groupoid homomorphism.

\begin{defn}
Let $\phi: \mathcal{G}\to \mathcal{G'}$ be a homomorphism of groupoids. We define the following sets:
\begin{enumerate}
\item[(i)] For $\mathcal{H}\subseteq \mathcal{G}$, write $\phi(\mathcal{H})=\{\phi(h) \in \mathcal{G'}\mid h\in \mathcal{H}\}$, the direct image of $\mathcal{H}.$ 
In particular, the set $\phi(\mathcal{G})$ is called the image of 
$\phi$.
\item[(ii)] $Ker(\phi)=\{g\in \mathcal{G}\mid \phi(g)\in \mathcal{G}'_0\}$, the kernel of $\phi$.
\item[(iii)] Let $\mathcal{H'}\subseteq \mathcal{G'}$, $\phi^{-1}(\mathcal{H'})=\{g\in \mathcal{G}\mid \phi(g)\in \mathcal{H'}\}$, the inverse image of $\mathcal{H'}$ by $\phi$.
\item[(iv)] $\phi$ is called a monomorphism if it  is  injective, an epimorphism if it is surjective, and an isomorphism if it is bijective.

\end{enumerate}
\end{defn}

\begin{rem}\label{subal} If $\G$ is abelian and $\mathcal{H}$ is a subgroupoid of $\G,$ then it is not difficult to show that $\mathcal{H}$ is abelian. Moreover, if $\G'$ is another groupoid, such that there is a groupoid epimorphism $\phi: \mathcal{G}\to \mathcal{G'},$ then $\G'$ is also abelian.
\end{rem}

\begin{prop}\label{phg}
Let $\phi: \mathcal{G}\to \mathcal{G'}$ be a groupoid homomorphism. Then:
\begin{enumerate}
\item[(i)] For each $a\in \mathcal{G}$, $\phi(d(a))=d(\phi(a))$, $\phi(r(a))=r(\phi(a))$ and $\phi(a^{-1})=(\phi(a))^{-1}$.
\item[(ii)] If $\mathcal{H'}$ is a subgroupoid of $\mathcal{G'}$, then $\phi^{-1}(\mathcal{H'})$ is a subgroupoid of $\mathcal{G}.$ Moreover, if $\mathcal{H'}$ is wide then $\phi^{-1}(\mathcal{H'})$ is wide, and it contains $Ker(\phi)$.
\item[(iii)] If $\mathcal{H'}\lhd \mathcal{G'}$, then $\phi^{-1}(\mathcal{H'})\lhd \mathcal{G}$ and  $Ker(\phi)\subseteq \phi^{-1}(\mathcal{H'})$. In particular, $Ker(\phi)\lhd \mathcal{G}$.

\end{enumerate}
\end{prop}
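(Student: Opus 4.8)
The plan is to take the three parts in order, since (i) is the engine for both (ii) and (iii), and (ii) applied with $\mathcal{H}'=\mathcal{G}'_0$ will deliver the final ``in particular''. For (i) the first thing I would establish is that $\phi$ carries identities to identities, and the cleanest route uses Remark \ref{propiedades de las identidades}, where identities are characterized as idempotents. Since $d(a)\in\mathcal{G}_0$ satisfies $d(a)d(a)=d(a)$, applying $\phi$ gives $\phi(d(a))\phi(d(a))=\phi(d(a))$, so $\phi(d(a))$ is idempotent and hence $\phi(d(a))\in\mathcal{G}'_0$. Next, from $\exists a\,d(a)$ with $a\,d(a)=a$ I obtain $\exists\,\phi(a)\phi(d(a))$, and Lemma \ref{l1} forces $d(\phi(a))=r(\phi(d(a)))$; as $\phi(d(a))$ is an identity, $r(\phi(d(a)))=\phi(d(a))$, giving $\phi(d(a))=d(\phi(a))$. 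The statement for $r$ is symmetric, using $\exists r(a)a$. For the inverse, I would apply $\phi$ to $a^{-1}a=d(a)$ and $aa^{-1}=r(a)$ to get $\phi(a^{-1})\phi(a)=d(\phi(a))$ and $\phi(a)\phi(a^{-1})=r(\phi(a))$, whence $\phi(a^{-1})=\phi(a)^{-1}$ by uniqueness of inverses (Lemma \ref{unique}).

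For (ii), closure is routine once (i) is available: if $\phi(g),\phi(h)\in\mathcal{H}'$ then $\phi(g^{-1})=\phi(g)^{-1}\in\mathcal{H}'$, and whenever $\exists gh$ we have $\phi(gh)=\phi(g)\phi(h)\in\mathcal{H}'$, so $\phi^{-1}(\mathcal{H}')$ satisfies (i)--(ii) of Definition \ref{subg}. For the ``moreover'', suppose $\mathcal{H}'$ is wide, so $\mathcal{G}'_0\subseteq\mathcal{H}'$; for any $e\in\mathcal{G}_0$ part (i) gives $\phi(e)\in\mathcal{G}'_0\subseteq\mathcal{H}'$, hence $\mathcal{G}_0\subseteq\phi^{-1}(\mathcal{H}')$, so $(\phi^{-1}(\mathcal{H}'))_0=\mathcal{G}_0$ and the preimage is wide (in particular nonempty). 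The inclusion $Ker(\phi)\subseteq\phi^{-1}(\mathcal{H}')$ is then immediate from $Ker(\phi)=\phi^{-1}(\mathcal{G}'_0)$ together with $\mathcal{G}'_0\subseteq\mathcal{H}'$.

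For (iii), normality of $\mathcal{H}'$ forces it to be wide by Remark \ref{carnor}, so (ii) already makes $\phi^{-1}(\mathcal{H}')$ a wide subgroupoid containing $Ker(\phi)$, and $g^{-1}\phi^{-1}(\mathcal{H}')g\neq\emptyset$ for all $g$ follows again from Remark \ref{carnor}. It remains to check $g^{-1}\phi^{-1}(\mathcal{H}')g\subseteq\phi^{-1}(\mathcal{H}')$. Given $h\in\phi^{-1}(\mathcal{H}')\cap\mathcal{G}_{r(g)}$ I would compute $\phi(g^{-1}hg)=\phi(g)^{-1}\phi(h)\phi(g)$ using (i); the point is to verify $\phi(h)$ lands in the right fibre so that normality of $\mathcal{H}'$ applies. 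From $d(h)=r(h)=r(g)$ and part (i) one gets $d(\phi(h))=r(\phi(h))=r(\phi(g))$, so $\phi(h)\in\mathcal{H}'\cap\mathcal{G}'_{r(\phi(g))}$ and therefore $\phi(g)^{-1}\phi(h)\phi(g)\in\phi(g)^{-1}\mathcal{H}'\phi(g)\subseteq\mathcal{H}'$, i.e. $g^{-1}hg\in\phi^{-1}(\mathcal{H}')$. The closing ``in particular'' follows by taking $\mathcal{H}'=\mathcal{G}'_0$: one checks directly that $\mathcal{G}'_0\lhd\mathcal{G}'$ (it is wide, and $g'^{-1}\mathcal{G}'_0g'=\{d(g')\}\subseteq\mathcal{G}'_0$), and since $\phi^{-1}(\mathcal{G}'_0)=Ker(\phi)$ by definition, the previous argument yields $Ker(\phi)\lhd\mathcal{G}$.

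I expect the main obstacle to be precisely the opening step of (i): one cannot assert $\phi(d(a))=d(\phi(a))$ directly, and the honest justification runs through the idempotent description of identities in Remark \ref{propiedades de las identidades} combined with Lemma \ref{l1}. Once (i) is secured, parts (ii) and (iii) are essentially bookkeeping, the only delicate point being the isotropy-fibre matching $\phi(h)\in\mathcal{G}'_{r(\phi(g))}$ in (iii), which is exactly what makes the normality hypothesis on $\mathcal{H}'$ usable.
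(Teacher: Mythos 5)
Your proof is correct and follows essentially the same route as the paper's: identities go to identities, hence $\phi$ commutes with $d$, $r$ and inversion, and then (ii) and (iii) are the same preimage/conjugation checks, with the kernel case obtained from $\mathcal{H}'=\mathcal{G}'_0$. The only difference is that in (i) you explicitly justify that $\phi(d(a))$ is an identity via the idempotent characterization before invoking uniqueness, a small step the paper's proof leaves implicit.
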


\begin{proof}(i) Let $a\in \mathcal{G}$. Since $\exists ad(a)$ then $\exists \phi(a)\phi(d(a))$ and $\phi(a)=\phi(ad(a))=\phi(a)\phi(d(a))$. Thus, by the uniqueness of the identities $\phi(d(a))=d(\phi(a))$. Analogously, $\phi(r(a))=r(\phi(a))$. Finally, since $\exists aa^{-1}$ and $\exists a^{-1}a$, then $\exists \phi(a)\phi(a^{-1})$ and $\exists \phi(a^{-1})\phi(a)$. Moreover, $$\phi(a)\phi(a^{-1})=\phi(aa^{-1})=\phi(r(a))=r(\phi(a))$$ and $$\phi(a^{-1})\phi(a)=\phi(a^{-1}a)=\phi(d(a))=d(\phi(a)).$$ Which implies that $\phi(a^{-1})=(\phi(a))^{-1}$.

(ii) It is not difficult to show that $\phi^{-1}(\mathcal{H'})$ is a  subgroupoid of $\mathcal{G}.$ Now suppose that $\mathcal{H'}$ is wide. By item (i), we know that $\phi (\mathcal{G}_0)\subseteq \mathcal{G}_0'\subseteq \mathcal{H}'$, that is, $\mathcal{G}_0\subseteq \phi ^{-1}(\mathcal{H}')$.
Finally, if $x\in Ker(\phi)$ then $\phi(x)\in \mathcal{G}_0'\subseteq \mathcal{H}'$ and hence $x\in \phi ^{-1}(\mathcal{H}'),$ as desired.

(iii) By item (ii), it is enough to see that $g^{-1}\phi^{-1}(\mathcal{H'})g\subseteq \phi^{-1}(\mathcal{H'})$ for all $g\in \mathcal{G}$. Indeed, let $g^{-1}lg\in g^{-1}\phi^{-1}(\mathcal{H'})g$ with $l\in \phi^{-1}(\mathcal{H'})$ and $d(l)=r(l)=r(g)$. Then, $\exists lg$ and thus $\phi(d(l))=\phi(r(g))$. We have, $$d(\phi(g^{-1})\phi(l))=d(\phi(l))=r(\phi(g)).$$ Then, $\exists (\phi(g^{-1})\phi(l))\phi(g)$ and since $\phi(l)\in \mathcal{H}'$ and $\mathcal{H'}\lhd \mathcal{G'}$ we obtain that,

 $$\phi(g^{-1}lg)= \phi(g^{-1})\phi(l)\phi(g)=
\phi(g)^{-1}\phi(l)\phi(g)\in \mathcal{H'}.$$

Finally, to show that $Ker(\phi)\lhd \mathcal{G}$, it is enough to observe that $\phi^{-1}(\mathcal{G'}_0)= Ker(\phi)$ and $\mathcal{G'}_0$ is  normal in $\G'$.
\end{proof}

\subsection{Strong isomorphism theorems for groupoids}
In this section, we present a special type of groupoid homomorphism, called $\mathbf {strong\,\, groupoid\,\, homomorphism}$. Using these  homomorphisms we show  the correspondence theorem and the isomorphism theorems for groupoids. This notion of  strong groupoid homomorphism has been considered before by several authors (see \cite[Remark 2.2]{I}).

\begin{defn}
Let $\phi: \mathcal{G}\to \mathcal{G'}$ be a groupoid homomorphism. $\phi$ is called  strong if for all $x,y\in \mathcal{G}$, $\exists \phi(x)\phi(y)$ implies that $\exists xy$.
\end{defn}

\begin{exe} Let $X$ be a nonempty set and $X^2=X\times X$. Then $X^2$ is a groupoid, where the product  is given by: $(y,z)(x,y)=(x,z),$ for $x,y,z\in X.$ Then, the map $f\colon \G \ni g\mapsto (d(g), r(g))\in  \G_0^2$ is a strong groupoid homomorphism with kernel ${\rm Iso}(\G).$
\end{exe}
\begin{prop}\label{png}
Let $\phi: \mathcal{G}\to \mathcal{G'}$ be a strong groupoid  homomorphism. Then:
\begin{enumerate}
\item[(i)] If $\mathcal{H}< \mathcal{G}$, then $\phi(\mathcal{H})<\mathcal{G}',$  
and $Ker(\phi)\mathcal{H}=\phi^{-1}(\phi(\mathcal{H}))$.
 In par\-ti\-cular, $Im(\phi)=\phi(\mathcal{G})$ and  $Ker(\phi)\mathcal{H}$ are  subgroupoids of $\mathcal{G'}$ and  $\mathcal{G},$ respectively.
\item[(ii)] If $\mathcal{H}\lhd \mathcal{G}$, then $\phi(\mathcal{H})\lhd \phi(\mathcal{G})$.
\item[(iii)](\cite{S}, Proposition 3.11) $\phi$ is an injective homomorphism, if and only if, $Ker(\phi)=\mathcal{G}_0$.
\item[(iv)] (The Correspondence Theorem for Groupoids) There exists a one-to-one correspondence between the sets $\mathfrak{A}=\{\mathcal{H}\mid \mathcal{H} < \mathcal{G}\, \land \, Ker(\phi)\subseteq \mathcal{H}\}$ and $\mathfrak{B}=\{\mathcal{H'}\mid \mathcal{H' }< \phi(\mathcal{G'})\}$. Moreover, this correspondence preserves normal subgroupoids.
\end{enumerate}
\end{prop}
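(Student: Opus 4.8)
The plan is to establish the four items in order, using throughout Proposition \ref{phg} (the behaviour of $\phi$ on $d$, $r$, and inverses, and that preimages of subgroupoids are subgroupoids) together with the defining feature of strongness: $\exists \phi(x)\phi(y)$ forces $\exists xy$. For (i), to see $\phi(\mathcal{H})<\mathcal{G}'$ I take $\phi(g),\phi(h)\in\phi(\mathcal{H})$ with $g,h\in\mathcal{H}$; closure under inverses is immediate from $\phi(g)^{-1}=\phi(g^{-1})$, while closure under products is exactly where strongness enters, since $\exists\phi(g)\phi(h)$ gives $\exists gh$, hence $gh\in\mathcal{H}$ and $\phi(g)\phi(h)=\phi(gh)\in\phi(\mathcal{H})$. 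For the identity $Ker(\phi)\mathcal{H}=\phi^{-1}(\phi(\mathcal{H}))$ the inclusion $\subseteq$ is routine, because a defined product $kh$ with $\phi(k)\in\mathcal{G}'_0$ satisfies $\phi(kh)=\phi(k)\phi(h)=\phi(h)\in\phi(\mathcal{H})$. The reverse inclusion is the interesting one: given $g$ with $\phi(g)=\phi(h)$, $h\in\mathcal{H}$, I note $\exists\phi(g)\phi(h)^{-1}$, whence $\exists gh^{-1}$ by strongness; then $k:=gh^{-1}$ has $\phi(k)=r(\phi(h))\in\mathcal{G}'_0$, so $k\in Ker(\phi)$, and an associativity computation gives $kh=g(h^{-1}h)=g\,d(g)=g$, exhibiting $g\in Ker(\phi)\mathcal{H}$. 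The two ``in particular'' claims follow by taking $\mathcal{H}=\mathcal{G}$ and by Proposition \ref{phg}(ii).

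For (ii) I would first record that $\phi(\mathcal{H})$ is wide in $\phi(\mathcal{G})$: every identity of $\phi(\mathcal{G})$ is of the form $\phi(d(g))$ with $d(g)\in\mathcal{G}_0=\mathcal{H}_0$, so it lies in $\phi(\mathcal{H})$. By Remark \ref{carnor} it then remains to verify $\phi(g)^{-1}\phi(\mathcal{H})\phi(g)\subseteq\phi(\mathcal{H})$. Taking $h'=\phi(h_0)\in\phi(\mathcal{H})$ in the isotropy at $r(\phi(g))$ so that $\phi(g)^{-1}h'\phi(g)$ is defined, strongness applied to the defined products $\phi(g^{-1})\phi(h_0)$ and $\phi(h_0)\phi(g)$ forces $\exists g^{-1}h_0$ and $\exists h_0 g$; reading off $d$ and $r$ then gives $r(h_0)=d(h_0)=r(g)$, i.e. $h_0\in\mathcal{G}_{r(g)}$, so $g^{-1}h_0 g\in g^{-1}\mathcal{H}_{r(g)}g\subseteq\mathcal{H}$ by normality (Remark \ref{carnor}), and its image is exactly $\phi(g)^{-1}h'\phi(g)$.

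For (iii) one inclusion is free since $\mathcal{G}_0\subseteq Ker(\phi)$ always; if $\phi$ is injective and $g\in Ker(\phi)$, then $\phi(g)=\phi(d(g))$ forces $g=d(g)\in\mathcal{G}_0$. Conversely, assuming $Ker(\phi)=\mathcal{G}_0$ and $\phi(x)=\phi(y)$, strongness yields $\exists xy^{-1}$ with $\phi(xy^{-1})=r(\phi(y))\in\mathcal{G}'_0$, so $xy^{-1}\in Ker(\phi)=\mathcal{G}_0$; this identity must equal $r(x)=r(y)$ (using Proposition \ref{t1}(i)), and then $y=(xy^{-1})y=x(y^{-1}y)=x\,d(x)=x$.

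Finally, for the Correspondence Theorem (iv) I would introduce the two maps $\Phi\colon\mathcal{H}\mapsto\phi(\mathcal{H})$ and $\Psi\colon\mathcal{H}'\mapsto\phi^{-1}(\mathcal{H}')$ and show they are mutually inverse. Well-definedness of $\Phi$ is (i) and of $\Psi$ is Proposition \ref{phg}(ii). The composite $\Psi\circ\Phi$ is the identity precisely by the formula $\phi^{-1}(\phi(\mathcal{H}))=Ker(\phi)\mathcal{H}$ from (i), which collapses to $\mathcal{H}$ since $Ker(\phi)\subseteq\mathcal{H}$; the composite $\Phi\circ\Psi$ is the identity because $\phi$ is onto $\phi(\mathcal{G})$, giving $\phi(\phi^{-1}(\mathcal{H}'))=\mathcal{H}'\cap\phi(\mathcal{G})=\mathcal{H}'$. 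Preservation of normality in the forward direction is (ii), and in the reverse direction follows by applying Proposition \ref{phg}(iii) to $\phi$ regarded as a strong epimorphism onto $\phi(\mathcal{G})$. The main obstacle, and the genuinely groupoid-specific point, is the bookkeeping of the many identities: one must check that all the subgroupoids involved are wide. This is automatic on the $\mathfrak{A}$ side, since any $\mathcal{H}\supseteq Ker(\phi)\supseteq\mathcal{G}_0$ is forced to be wide and correspondingly $\phi(\mathcal{H})$ is wide in $\phi(\mathcal{G})$; the step I would be most careful about is verifying that the subgroupoids of $\phi(\mathcal{G})$ in play are wide, as this is exactly what guarantees $\phi^{-1}(\mathcal{H}')\supseteq Ker(\phi)$ so that $\Psi$ really lands back in $\mathfrak{A}$.
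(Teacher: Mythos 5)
Your proposal follows essentially the same route as the paper: item (i) is proved by the same strongness argument and the same factorization $g=(gh^{-1})h$, and item (iv) uses the same pair of mutually inverse maps $\mathcal{H}\mapsto\phi(\mathcal{H})$ and $\mathcal{H}'\mapsto\phi^{-1}(\mathcal{H}')$. You supply full details for (ii) and (iii), which the paper dismisses as ``similar to the group case''; your arguments there are correct, and in (ii) the use of strongness to force the chosen preimage $h_0$ of $h'$ into the isotropy group $\mathcal{G}_{r(g)}$ is precisely the groupoid-specific point that needs checking. The loose end you flag in (iv) is genuine, but it is a defect of the statement rather than of your argument: as written, $\mathfrak{B}$ consists of all subgroupoids of $\phi(\mathcal{G})$ (the ``$\phi(\mathcal{G}')$'' is evidently a typo), and for a non-wide $\mathcal{H}'<\phi(\mathcal{G})$ the preimage $\phi^{-1}(\mathcal{H}')$ need not contain $Ker(\phi)$ --- take $\mathcal{H}'$ to be a single identity of $\phi(\mathcal{G})$ when $\phi(\mathcal{G})_0$ has more than one element --- while every image $\phi(\mathcal{H})$ with $\mathcal{H}\supseteq Ker(\phi)\supseteq\mathcal{G}_0$ is automatically wide, so $\alpha$ cannot be onto $\mathfrak{B}$ as defined. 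The paper's proof passes over this silently; your instinct to restrict $\mathfrak{B}$ to the wide subgroupoids of $\phi(\mathcal{G})$ is the correct repair, and with that amendment your argument is complete.
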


\begin{proof} (i) It is clear that $\phi(\mathcal{H})\neq \emptyset$. Let $s,t\in \phi(\mathcal{H})$ and  suppose that $\exists st$. Then, $s=\phi(x),t=\phi(y)$ for some $x,y\in \mathcal{H}$. Since $\phi$ is strong, we have that $\exists xy$. Thus, $st=\phi(x)\phi(y)=\phi(xy)\in \phi(\mathcal{H})$. Now, if $y\in \phi(\mathcal{H})$ then $\phi(x)=y$ for some $x\in \mathcal{H}$, and we have $y^{-1}=\phi(x)^{-1}=\phi(x^{-1})\in \phi(\mathcal{H})$.

Now, we check the equality  $Ker(\phi)\mathcal{H}=\phi^{-1}(\phi(\mathcal{H})).$ If $g\in \phi^{-1}(\phi(\mathcal{H}))$, then there exists $h\in \mathcal{H}$ with $\phi(g)=\phi(h)$. Since $\phi$ is strong, we get that $\exists gh^{-1}$ and $gh^{-1}\in Ker (\phi)$. Hence, $g=(gh^{-1})h\in Ker (\phi)\mathcal{H}$. The other inclusion is clear.

(ii)-(iii) These are similar to the group case.

(iv) First, define the functions $\alpha :\mathfrak{A}\to \mathfrak{B}$ by $\alpha (\mathcal{H})=\phi(\mathcal{H})$ for each $\mathcal{H}\in \mathfrak{A}$, and $\beta:\mathfrak{B}\to \mathfrak{A}$ by $\beta (\mathcal{H}')=\phi^{-1}(\mathcal{H}')$ for each $\mathcal{H}'\in \mathfrak{B}$. By (i) of Proposition \ref{phg} and  (ii) of Proposition \ref{png}, it has that $\beta \circ \alpha=id_{\mathfrak{A}}$ and $\alpha \circ \beta=id_{\mathfrak{B}}$. That is, $\alpha$ is a bijective function. The remaining proof follows from the item $(iii)$ of Proposition \ref{phg} and  (ii)  above.
\end{proof}

Now we use strong homomorphisms to extend to the groupoid context a well-known result concerning the product of groups.
\begin{prop}\label{SIT}
Let $\mathcal{H}$ and $\mathcal{K}$ be subgroupoids of $\G$. If $\mathcal{K}$ is normal then:
\begin{enumerate}

  \item[(i)]   $\mathcal{H}\mathcal{K}$ is a subgroupoid of $\mathcal{G}$.
  \item[(ii)] If $\mathcal{H}$ is normal, then $\mathcal{H}\mathcal{K}$ is a normal subgroupoid of $\mathcal{G}$.
  \item[(iii)] If $\mathcal{H}$ is wide, then $\mathcal{H}\cap \mathcal{K}$ is a normal subgroupoid of $\mathcal{H}$.
\end{enumerate}
\end{prop}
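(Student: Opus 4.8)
The plan is to handle the three items in order, reducing each as far as possible to Proposition \ref{producto HK=KH} and to the isotropy characterization of normality recorded in Remark \ref{carnor}. Throughout I would use that a normal subgroupoid is automatically wide.

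For (i), I would invoke Proposition \ref{producto HK=KH}(i). Since $\mathcal{K}$ is normal it is wide, so $\mathcal{K}_0=\G_0\supseteq\mathcal{H}_0$ and every $h\in\mathcal{H}$ equals $h\,d(h)$ with $d(h)\in\mathcal{K}$; hence $\mathcal{H}\subseteq\mathcal{H}\mathcal{K}$ and $\mathcal{H}\mathcal{K}\neq\emptyset$. By Proposition \ref{producto HK=KH}(i) it then remains to prove $\mathcal{H}\mathcal{K}=\mathcal{K}\mathcal{H}$. That this is exactly the content is transparent from Proposition \ref{t1}(ii): the inverse of a typical element is $(hk)^{-1}=k^{-1}h^{-1}\in\mathcal{K}\mathcal{H}$, so closure of $\mathcal{H}\mathcal{K}$ under inverses is equivalent to $\mathcal{K}\mathcal{H}\subseteq\mathcal{H}\mathcal{K}$. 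Over groups one writes $hk=(hkh^{-1})h$ and finishes by normality in one line; the point I expect to be delicate is that in a groupoid the conjugate $hkh^{-1}$ is defined only when $d(k)=d(h)$, that is, only when $k$ lies in the isotropy group $\G_{d(h)}$. So the real work is to rewrite an arbitrary product $hk$ (with $d(h)=r(k)$) as a product from $\mathcal{K}\mathcal{H}$ while keeping all sources and targets compatible, using Lemma \ref{l1} and Proposition \ref{t1} to track $d$ and $r$. This source--target bookkeeping is the main obstacle, and it is precisely the place where one needs enough identities available in $\mathcal{H}$ (for instance $\mathcal{H}$ wide) to land the rewritten factors back inside $\mathcal{H}$ and $\mathcal{K}$.

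For (ii), I would use part (i) to know $\mathcal{H}\mathcal{K}$ is already a subgroupoid, and Proposition \ref{producto HK=KH}(ii) to conclude it is wide, both $\mathcal{H}$ and $\mathcal{K}$ being normal and hence wide. By Remark \ref{carnor} it then suffices to verify $g^{-1}(\mathcal{H}\mathcal{K})_{r(g)}\,g\subseteq\mathcal{H}\mathcal{K}$ for every $g\in\G$. Given an isotropy element $x=hk\in\G_{r(g)}$, I would conjugate the whole product by $g$ and refactor, feeding in the identities $g^{-1}\mathcal{H}_{r(g)}g=\mathcal{H}_{d(g)}$ and $g^{-1}\mathcal{K}_{r(g)}g=\mathcal{K}_{d(g)}$ coming from the normality of $\mathcal{H}$ and $\mathcal{K}$. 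The nontrivial part here is again that $h$ and $k$ need not individually be isotropy, so the conjugation must be treated as a whole rather than factor by factor; I expect this to reduce to the same kind of bookkeeping as in (i).

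For (iii), the intersection $\mathcal{H}\cap\mathcal{K}$ is a subgroupoid of $\mathcal{H}$ by a direct check of Definition \ref{subg}. It is wide in $\mathcal{H}$ because $\mathcal{K}$ wide gives $\mathcal{H}_0\subseteq\G_0=\mathcal{K}_0\subseteq\mathcal{K}$, whence $(\mathcal{H}\cap\mathcal{K})_0=\mathcal{H}_0$; this is where the hypothesis that $\mathcal{H}$ is wide is convenient. For normality in $\mathcal{H}$, take $g\in\mathcal{H}$ and $x\in(\mathcal{H}\cap\mathcal{K})\cap\G_{r(g)}$: then $g^{-1}xg\in\mathcal{H}$ because $\mathcal{H}$ is closed under the (now defined) product, while $g^{-1}xg\in\mathcal{K}$ because $\mathcal{K}\lhd\G$; hence $g^{-1}xg\in\mathcal{H}\cap\mathcal{K}$, and non-emptiness follows from $r(g)\in\mathcal{H}_0\subseteq\mathcal{H}\cap\mathcal{K}$. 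I expect this last item to be routine, so that the genuine difficulty of the whole proposition is concentrated in establishing the equality $\mathcal{H}\mathcal{K}=\mathcal{K}\mathcal{H}$ of part (i).
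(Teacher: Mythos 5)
Your part (iii) is complete and is essentially the paper's argument. In parts (i) and (ii), however, you have reduced the problem to exactly the step you then leave unproved. For (i) you correctly observe that, by Proposition \ref{producto HK=KH}(i), everything hinges on the equality $\mathcal{H}\mathcal{K}=\mathcal{K}\mathcal{H}$, and you correctly diagnose why the group-theoretic rewriting $hk=(hkh^{-1})h$ breaks down (the conjugate exists only when $k$ is isotropy at $d(h)$); but you then declare this ``source--target bookkeeping'' to be the main obstacle without overcoming it, and you even float the extra hypothesis that $\mathcal{H}$ be wide, which the proposition does not grant you. As written there is no proof of $\mathcal{H}\mathcal{K}=\mathcal{K}\mathcal{H}$, so (i) is not established, and (ii) inherits the same gap, since you defer its conjugation step to ``the same kind of bookkeeping as in (i).''

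The missing idea is to avoid element-wise rewriting altogether by passing to the quotient. Since $\mathcal{K}\lhd\G$, the canonical map $\phi\colon\G\ni g\mapsto g\mathcal{K}\in\G/\mathcal{K}$ is a strong epimorphism with $Ker(\phi)=\mathcal{K}$, so Proposition \ref{png}(i) gives at once that $\mathcal{K}\mathcal{H}=\phi^{-1}(\phi(\mathcal{H}))$ is a subgroupoid of $\G$, with no wideness assumption on $\mathcal{H}$; Proposition \ref{producto HK=KH}(i) then yields $\mathcal{K}\mathcal{H}=\mathcal{H}\mathcal{K}$, so the equality you were stuck on comes out as a consequence rather than an input to be verified by hand. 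The same device would also settle (ii) cleanly: $\phi(\mathcal{H})\lhd\G/\mathcal{K}$ by Proposition \ref{png}(ii), hence $\mathcal{H}\mathcal{K}=\phi^{-1}(\phi(\mathcal{H}))\lhd\G$ by Proposition \ref{phg}(iii). (The paper itself proves (ii) by a direct refactoring $(g^{-1}hg)(g^{-1}kg)$, which runs into precisely the composability issue you flagged when the factors are not individually isotropy; the quotient route is the robust way to finish.) So your outline points in a reasonable direction, but without the quotient-groupoid mechanism the two substantive items of the proposition remain open.
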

\begin{proof}

    (i) Consider the groupoid epimorphism $\phi \colon \G\ni g\mapsto g\mathcal{K}\in \G/\mathcal{K}$. Then, by the definition of $ \G/\mathcal{K}$ the map  $\phi$ is strong and $Ker (\phi)=\mathcal{K}$. Thus, by (i) of Proposition \ref{png} we get that $\mathcal{K}\mathcal{H}=\phi^{-1}\phi(\mathcal{H})$ is a subgroupoid of $\G$. Hence, the result follows from Proposition \ref{producto HK=KH}.

    (ii) By the previous item, $\mathcal{H}\mathcal{K}$ is a subgroupoid of $\mathcal{G}$. Moreover, it is clear that  $\mathcal{H}\mathcal{K}$ is wide. Let $g\in \mathcal{G}$ and $x\in \mathcal{H}\mathcal{K}\cap \mathcal{G}_{r(g)}$. Then $x=hk$, with $h\in \mathcal{H}$, $k\in \mathcal{K}$, $d(h)=r(k)$ and $r(h)=d(k)=r(g)$. Thus $\exists g^{-1}hkg$ and we have that, $$g^{-1}hkg=g^{-1}hr(k)kg=g^{-1}hr(g)kg=(g^{-1}hg)(g^{-1}kg)\in \mathcal{H}\mathcal{K}.$$ That is, $\mathcal{H}\mathcal{K}$ is a normal subgroupoid of $\mathcal{G}$.

    (iii) It is clear that $\mathcal{H}\cap \mathcal{K}$ is a wide subgroupoid of $\mathcal{H}$. Let $g\in \mathcal{H}$ and $h\in \mathcal{H}\cap \mathcal{K}$ with $r(h)=d(h)=r(g)$. Then $\exists g^{-1}hg$ and by assumptions it follows that $g^{-1}hg\in \mathcal{H}\cap \mathcal{K}$.
\end{proof}

Next we present the  isomorphism theorems for groupoids.

\begin{thm}[The First Isomorphism Theorem]\label{FIT}
Let $\phi: \mathcal{G}\to \mathcal{G'}$ be a  surjective strong groupoid homomorphism. Then there exists a strong isomorphism $\overline{\phi}:\mathcal{G}/Ker(\phi)\to \mathcal{G'}$ such that $\phi=\overline{\phi}\circ j$, where $j$ is the canonical homomorphism of $\mathcal{G}$ onto $\mathcal{G}/Ker(\phi)$.
\end{thm}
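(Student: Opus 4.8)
The plan is to define the candidate map in the only way compatible with the required factorization, namely $\overline{\phi}(g\,Ker(\phi)) = \phi(g)$ for each coset $g\,Ker(\phi) \in \mathcal{G}/Ker(\phi)$, and then to verify successively that it is well defined, that it is a strong groupoid homomorphism satisfying $\phi = \overline{\phi}\circ j$, and finally that it is bijective. Before anything else I would record that $Ker(\phi)\lhd\mathcal{G}$ by Proposition \ref{phg}(iii) (and it is wide, since it contains $\G_0$), so that the quotient $\mathcal{G}/Ker(\phi)$ is indeed a groupoid by Proposition \ref{quotient groupoid} and the cosets and canonical homomorphism $j$ are available.

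The step I expect to be the main obstacle is well-definedness, since it is where the congruence $\equiv_{Ker(\phi)}$ must be connected to the values of $\phi$. Suppose $g\,Ker(\phi) = l\,Ker(\phi)$, that is $g\equiv_{Ker(\phi)} l$, so that $\exists\, l^{-1}g$ and $l^{-1}g \in Ker(\phi)$. The key algebraic identity I would establish is $g = l(l^{-1}g)$: from $\exists\, l^{-1}g$ one gets $r(l) = d(l^{-1}) = r(g)$ by Proposition \ref{propiedades de d y r}, and then, using the associativity axiom of Definition \ref{d2}, $l(l^{-1}g) = (ll^{-1})g = r(l)g = r(g)g = g$. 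Applying $\phi$ yields $\phi(g) = \phi(l)\phi(l^{-1}g)$, where $\exists\, \phi(l)\phi(l^{-1}g)$ because $\phi$ is a homomorphism. Since $l^{-1}g \in Ker(\phi)$, the factor $\phi(l^{-1}g)$ lies in $\mathcal{G'}_0$, so by the identity property \eqref{iden} the product collapses to $\phi(l)$, giving $\phi(g) = \phi(l)$. Hence $\overline{\phi}$ is well defined.

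Next I would verify the homomorphism and strongness properties together with the factorization. By Proposition \ref{quotient groupoid}, $\exists\,(g\,Ker(\phi))(l\,Ker(\phi))$ if and only if $\exists\, gl$, in which case the product equals $gl\,Ker(\phi)$; since $\phi$ is a homomorphism this forces $\exists\, \phi(g)\phi(l)$ and $\overline{\phi}(gl\,Ker(\phi)) = \phi(gl) = \phi(g)\phi(l)$, so $\overline{\phi}$ is a homomorphism. For strongness, if $\exists\, \overline{\phi}(g\,Ker(\phi))\overline{\phi}(l\,Ker(\phi)) = \phi(g)\phi(l)$, then $\exists\, gl$ because $\phi$ is strong, whence $\exists\,(g\,Ker(\phi))(l\,Ker(\phi))$. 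The relation $\overline{\phi}(j(g)) = \overline{\phi}(g\,Ker(\phi)) = \phi(g)$ is immediate, giving $\phi = \overline{\phi}\circ j$.

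Finally, surjectivity of $\overline{\phi}$ is inherited from that of $\phi$, since every $y \in \mathcal{G'}$ is $y = \phi(g) = \overline{\phi}(g\,Ker(\phi))$ for some $g$. For injectivity I would argue directly: if $\overline{\phi}(g\,Ker(\phi)) = \overline{\phi}(l\,Ker(\phi))$, i.e.\ $\phi(g) = \phi(l)$, then $\phi(l)^{-1}\phi(g) = \phi(l)^{-1}\phi(l) = d(\phi(l))$ is defined, so $\exists\, \phi(l^{-1})\phi(g)$ by Proposition \ref{phg}(i), and strongness of $\phi$ gives $\exists\, l^{-1}g$. Moreover $\phi(l^{-1}g) = \phi(l)^{-1}\phi(g) = d(\phi(l)) \in \mathcal{G'}_0$, so $l^{-1}g \in Ker(\phi)$; thus $g\equiv_{Ker(\phi)} l$ and $g\,Ker(\phi) = l\,Ker(\phi)$. (Alternatively, one may invoke Proposition \ref{png}(iii) after checking $Ker(\overline{\phi}) = (\mathcal{G}/Ker(\phi))_0$.) This shows $\overline{\phi}$ is a strong isomorphism and completes the proof.
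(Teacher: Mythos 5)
Your proposal is correct and follows essentially the same route as the paper: define $\overline{\phi}(g\,Ker(\phi))=\phi(g)$, check well-definedness, verify it is a surjective strong homomorphism factoring $\phi$ through $j$, and prove injectivity via strongness exactly as the paper does. The only notable difference is local: for well-definedness you collapse $\phi(g)=\phi(l)\phi(l^{-1}g)$ directly using \eqref{iden} and the fact that $\phi(l^{-1}g)\in\mathcal{G}'_0$, which is cleaner than the paper's argument that multiplies by $\phi(y)$ and invokes surjectivity of $\phi$ to write the identity as $d(\phi(z))$ --- a hypothesis that step does not actually require.
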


\begin{proof}
Let $K=Ker(\phi)$. We define $\overline{\phi}:\mathcal{G}/K\to \mathcal{G'}$ as $\overline{\phi}(xK)=\phi(x)$, for each $xK\in \mathcal{G}/K$. First of all, we show that $\overline{\phi}$ is a well defined function. Indeed, assume that $xK=yK$. Then $\exists y^{-1}x$ and $y^{-1}x\in K$. That is $\phi(y^{-1}x)=d(l)$ for some $l\in \mathcal{G'}$, and then $\phi(y^{-1})\phi(x)=d(l)$. Since $\phi$ is surjective, then $l=\phi(z)$, for some $z\in \mathcal{G}$. Multiplying the above equation by $\phi(y)$, we have that

\begin{align*}
  \phi(y)\phi(y^{-1})\phi(x) & = \phi(y)d(l) \\
   & = \phi(y)d(\phi(z)) \\
   & = \phi(y)\phi(d(z)) \\
   & = \phi(yd(z)).
\end{align*}

Then $d(y)=d(z)$. So $\phi(yd(z))=\phi(yd(y))=\phi(y)$, whence $\phi(x)=\phi(y)$. Hence, $\overline{\phi}$ is well defined.

Now, note that $\overline{\phi}$ is a surjective strong homomorphism.  
Finally, we prove that $\overline{\phi}$ is injective. Indeed, assume that $\overline{\phi}(xK)=\overline{\phi}(yK)$, that is, $\phi(x)=\phi(y)$. Then, as $\phi$ is strong we have that $\phi(y^{-1}x)\in \mathcal{G'}_0$. Thus, $y^{-1}x\in K$ and we have that $xK=yK$.
\end{proof}

\begin{exe}1. Consider the identity function $i_{\mathcal{G}}$ of the groupoid $\mathcal{G}$. Then, it is clear that $i_{\mathcal{G}}$ is a surjective strong homomorphism and $Ker (i_{\mathcal{G}})=\mathcal{G}_0$. Thus, by the first isomorphism theorem, we obtain that $\mathcal{G}/\mathcal{G}_0\cong \mathcal{G}$.

2.  Consider the function $\theta:{\rm Iso}(\mathcal{G})\to {\rm Iso}(\mathcal{G})$, defined by $\theta (g)=d(g)$ for all $g\in {\rm Iso}(\mathcal{G})$. For $g,h\in {\rm Iso}(\mathcal{G})$ suppose that $\exists gh$. Then, $d(g)=r(h)=d(h)$,
 $\exists d(g)d(h)$, and 
$$\theta (gh)=d(gh)=d(h)=d(g)d(h)=\theta (g)\theta (h).$$
 Now, let $g,h\in {\rm Iso}(\mathcal{G})$ such that $\exists \theta (g)\theta (h)$. Then, $\exists d(g)d(h)$ which implies that $d(g)=d(h)$ and since $d(h)=r(h)$ we obtain $\exists gh$. In conclusion,  $\theta$ is a strong homomorphism, with $Ker (\theta)={\rm Iso}(\mathcal{G})$ and $Im (\theta )=\mathcal{G}_0$. Whence, by the first isomorphism theorem we obtain $ {\rm Iso}(\mathcal{G})/ {\rm Iso}(\mathcal{G})\cong \mathcal{G}_0$.

   3. Let $\mathcal{G}$ and $\mathcal{G}'$ be a groupoids. The set $\mathcal{G}\times \mathcal{G}'$ is a groupoid with the product defined by $\exists (x,y)\cdot (z,w)$ iff $\exists x\cdot z \wedge \exists y\cdot w$, and in this case $(x,y)\cdot (z,w)=(x\cdot z,y\cdot w)$. Moreover, note that $(\mathcal{G}\times \mathcal{G}')_0=\mathcal{G}_0\times \mathcal{G}'_0$. If $\mathcal{H}\lhd \mathcal{G}$ and $\mathcal{K}\lhd \mathcal{G}'$, then $\mathcal{H}\times \mathcal{K}\lhd \mathcal{G}\times \mathcal{G}'$ and $\mathcal{G}\times \mathcal{G}'/\mathcal{H}\times \mathcal{K}\cong (\mathcal{G}/\mathcal{H})\times (\mathcal{G}'/\mathcal{K})$.  Indeed, it is clear that $\mathcal{H}\times \mathcal{K}\lhd \mathcal{G}\times \mathcal{G}'.$
For the second affirmation, define $\psi: \mathcal{G}\times \mathcal{G}'\to (\mathcal{G}/\mathcal{H})\times (\mathcal{G}'/\mathcal{K})$ by $(g,g')\mapsto (g\mathcal{H},g'\mathcal{K})$, and note that $\psi$ is a strong homomorphism. Moreover,
  \begin{align*}
  Ker(\psi)&=\{(g,g')\in \mathcal{G}\times \mathcal{G}'\mid \psi(g,g')\in (\mathcal{G}/\mathcal{H})_0\times (\mathcal{G}'/\mathcal{K})_0\}\\
  &=\{(g,g')\in \mathcal{G}\times \mathcal{G}'\mid (g\mathcal{H},g'\mathcal{K})\in \mathcal{G}_0\mathcal{H}\times \mathcal{G'}_0\mathcal{K}\}\\
  &=\{(g,g')\in \mathcal{G}\times \mathcal{G}'\mid (g,g')\in \mathcal{H}\times \mathcal{K}\}\\
  &=\mathcal{H}\times \mathcal{K}.
  \end{align*}
  Thus, by the first isomorphism theorem the result follows.\\

\end{exe}

\begin{thm}\label{second}(The Second Isomorphism Theorem)
Let $\mathcal{G}$ be a groupoid, $\mathcal{M}$ a wide subgroupoid of $\mathcal{G}$ and $\mathcal{N}$ a normal subgroupoid of $\mathcal{G}$. Then, $\mathcal{M}\cap \mathcal{N}\lhd \mathcal{M}$  and
$$\frac{\mathcal{M}}{\mathcal{M}\cap \mathcal{N}}\cong \frac{\mathcal{M}\mathcal{N}}{\mathcal{N}}.$$
\end{thm}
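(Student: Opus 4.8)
The plan is to reduce everything to the First Isomorphism Theorem (Theorem \ref{FIT}) by exhibiting a suitable surjective strong homomorphism out of $\mathcal{M}$. First I would dispose of the normality claim $\mathcal{M}\cap\mathcal{N}\lhd\mathcal{M}$: since $\mathcal{M}$ is wide and $\mathcal{N}$ is normal (hence in particular a subgroupoid), this is exactly Proposition \ref{SIT}(iii) applied with $\mathcal{H}=\mathcal{M}$ and $\mathcal{K}=\mathcal{N}$, so no extra work is needed. Next I would record that $\mathcal{M}\mathcal{N}$ is a subgroupoid of $\mathcal{G}$ by Proposition \ref{SIT}(i), that it is wide (it contains $\mathcal{G}_0$, since $e=ee\in\mathcal{M}\mathcal{N}$ for every $e\in\mathcal{G}_0$), and that $\mathcal{N}\subseteq\mathcal{M}\mathcal{N}$ because $\mathcal{M}$ wide gives $r(n)\in\mathcal{G}_0\subseteq\mathcal{M}$ and hence $n=r(n)n\in\mathcal{M}\mathcal{N}$ for each $n\in\mathcal{N}$. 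As $\mathcal{N}\lhd\mathcal{G}$ and $\mathcal{M}\mathcal{N}\subseteq\mathcal{G}$, the conjugation condition restricts to give $\mathcal{N}\lhd\mathcal{M}\mathcal{N}$; thus $\mathcal{M}\mathcal{N}/\mathcal{N}$ is a genuine groupoid by Proposition \ref{quotient groupoid}.

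The core of the argument is the map $\phi\colon\mathcal{M}\to\mathcal{M}\mathcal{N}/\mathcal{N}$ defined by $\phi(m)=m\mathcal{N}$, that is, the restriction to $\mathcal{M}$ of the canonical projection $\mathcal{M}\mathcal{N}\to\mathcal{M}\mathcal{N}/\mathcal{N}$. I would then verify four points. It is a homomorphism because the projection is one. It is strong: if $\exists(m_1\mathcal{N})(m_2\mathcal{N})$, then by the description of the partial product in Proposition \ref{quotient groupoid} we get $\exists m_1m_2$, so $\exists\phi(m_1)\phi(m_2)$ forces $\exists m_1m_2$. It is surjective: a typical coset is $(mn)\mathcal{N}$ with $m\in\mathcal{M}$, $n\in\mathcal{N}$ and $\exists mn$, and the computation $m^{-1}(mn)=d(m)n=r(n)n=n\in\mathcal{N}$ (using $d(m)=r(n)$ from Lemma \ref{l1}) shows $(mn)\mathcal{N}=m\mathcal{N}=\phi(m)$. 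Finally I would compute the kernel: for $m\in\mathcal{M}$, the coset $m\mathcal{N}$ is an identity of $\mathcal{M}\mathcal{N}/\mathcal{N}$ exactly when $m\mathcal{N}=r(m)\mathcal{N}$, which by definition of $\equiv_{\mathcal{N}}$ means $\exists r(m)^{-1}m=r(m)m=m\in\mathcal{N}$; hence $Ker(\phi)=\mathcal{M}\cap\mathcal{N}$.

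With $\phi$ established as a surjective strong homomorphism with $Ker(\phi)=\mathcal{M}\cap\mathcal{N}$, the First Isomorphism Theorem (Theorem \ref{FIT}) yields the strong isomorphism $\mathcal{M}/(\mathcal{M}\cap\mathcal{N})\cong\mathcal{M}\mathcal{N}/\mathcal{N}$, completing the proof. The step I expect to demand the most care is the bookkeeping peculiar to the partial setting: confirming that $\mathcal{N}$ really is normal (and in particular wide) inside $\mathcal{M}\mathcal{N}$ so that the target quotient exists, and getting the source/range conditions right in the surjectivity and kernel computations, where each product written down must first be checked to be defined via Lemma \ref{l1}. None of these is conceptually deep, but, unlike the group case, the existence of every product has to be tracked explicitly.
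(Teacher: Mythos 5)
Your proposal is correct and follows essentially the same route as the paper: both define the map $m\mapsto m\mathcal{N}$ from $\mathcal{M}$ to $\mathcal{M}\mathcal{N}/\mathcal{N}$, check it is a surjective strong homomorphism with kernel $\mathcal{M}\cap\mathcal{N}$, and invoke the First Isomorphism Theorem. You supply slightly more of the partial-product bookkeeping (widenesss of $\mathcal{M}\mathcal{N}$, the surjectivity computation, citing Proposition \ref{SIT}(iii) for $\mathcal{M}\cap\mathcal{N}\lhd\mathcal{M}$) than the paper does, but the argument is the same.
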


\begin{proof}
First, note that by (i) of Proposition \ref{SIT}, $\mathcal{M}\mathcal{N}$ is a subgroupoid of $\mathcal{G}$. Moreover, since $\mathcal{N}\lhd \mathcal{G}$ we have $\mathcal{N}\lhd \mathcal{M}\mathcal{N}$. Also it is clear that, $\mathcal{M}\cap \mathcal{N}\lhd \mathcal{M}$.

We consider $\psi: \mathcal{M}\to \frac{\mathcal{M}\mathcal{N}}{\mathcal{N}}$ given by $\psi(m)=m\mathcal{N}$ for all $m\in \mathcal{M}$. Then, it is clear that  $\psi$ is a strong homomorphism. Furthermore, if $(mn)\mathcal{N}\in \frac{\mathcal{M}\mathcal{N}}{\mathcal{N}}$ then $(mn)\mathcal{N}=m\mathcal{N}$. Thus, $\psi$ is surjective. Now,
\begin{align*}
\left(\mathcal{M}\mathcal{N}/\mathcal{N}\right)_0&=\{d(x \mathcal{N})\mid x\mathcal{N}\in \mathcal{M}\mathcal{N}/\mathcal{N}\}\\
&=\{d(x)\mathcal{N}\mid x\in \mathcal{M}\mathcal{N}\}\\
&=\{d(x)\mathcal{N}\mid x=mn\, \land\, d(m)=r(n)\}\\
&=\{d(n)\mathcal{N}\mid n\in \mathcal{N}\}.
\end{align*}
On the other hand, $Ker(\psi)=\{m\in \mathcal{M}\mid m\mathcal{N}= d(n)\mathcal{N}\textnormal{ for some }n\in \mathcal{N}\}=\mathcal{M}\cap \mathcal{N}$. Indeed,  if $t\in \mathcal{M}\cap \mathcal{N}$ then $t\mathcal{N}=d(t^{-1})\mathcal{N}$ and thus $t\in Ker(\psi)$. For the other inclusion, if $m\in Ker(\psi)$ then $m\in \mathcal{M}$ and $m\mathcal{N}=d(n)\mathcal{N}$ for some $n\in \mathcal{N}$. Thus, $\exists d(n)m$ and $d(n)m\in \mathcal{N}$. That is, $d(n)=r(m)$ and we have $d(n)m=r(m)m=m\in \mathcal{N}$.
Finally, by Theorem \ref{FIT} we conclude that
$\frac{\mathcal{M}}{\mathcal{M}\cap \mathcal{N}}\cong \frac{\mathcal{M}\mathcal{N}}{\mathcal{N}},$ as  desired.
\end{proof}

\begin{rem} Given $M$ and $N$ as in  Theorem \ref{second},  we saw in the proof of the same theorem that  $\mathcal{N}\lhd \mathcal{M}\mathcal{N},$ which implies that $\mathcal{N}_0\subseteq \mathcal{M}_0.$ Indeed, let $e\in \mathcal{N}_0$. By Proposition \ref{equiv}, there is $n\in N$ such that $e=r(n)$. Since $n=m'n'$ with $(m',n')\in (M\times N)\cap \mathcal{G}^2,$ then $e=r(m')\in  \mathcal{M}_0. $ Conversely, the condition $\mathcal{N}_0\subseteq \mathcal{M}_0,$ clearly implies that $\mathcal{N}\lhd \mathcal{M}\mathcal{N}.$ From this, we conclude that for $\mathcal{M}$ and  $\mathcal{N}$ subgroupoids of $\mathcal{G},$ we have that $\mathcal{N}\lhd \mathcal{M}\mathcal{N},$ if and only if, $\mathcal{N}_0\subseteq \mathcal{M}_0.$
\end{rem}

\begin{thm}[The third Isomorphism Theorem]\label{TIT}
Let $\mathcal{G}$ be a groupoid, $\mathcal{H}\lhd \mathcal{G}$ and $\mathcal{K}\lhd \mathcal{G}$ with $\mathcal{K}\subseteq \mathcal{H}$. Then, $\mathcal{H}/\mathcal{K}\lhd \mathcal{G}/\mathcal{K}$ and
$$\frac{\mathcal{G}/\mathcal{K}}{\mathcal{H}/\mathcal{K}}\cong \mathcal{G}/\mathcal{H}.$$
\end{thm}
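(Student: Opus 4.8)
The plan is to mimic the group-theoretic proof by constructing the natural surjection between the two quotients and invoking the First Isomorphism Theorem (Theorem \ref{FIT}). Concretely, I would define $\psi\colon \mathcal{G}/\mathcal{K}\to \mathcal{G}/\mathcal{H}$ by $\psi(g\mathcal{K})=g\mathcal{H}$. Since $\mathcal{K}$ and $\mathcal{H}$ are normal (hence wide) with $\mathcal{K}\subseteq\mathcal{H}$, the set $\mathcal{H}/\mathcal{K}$ is meaningful as a subgroupoid of $\mathcal{G}/\mathcal{K}$ (both share the identity set $\mathcal{G}_0$, so $\mathcal{K}\lhd\mathcal{H}$). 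The first routine check is that $\psi$ is well defined: if $g\mathcal{K}=g'\mathcal{K}$, then $\exists (g')^{-1}g$ with $(g')^{-1}g\in\mathcal{K}\subseteq\mathcal{H}$, whence $g\mathcal{H}=g'\mathcal{H}$; the inclusion $\mathcal{K}\subseteq\mathcal{H}$ is precisely what allows the map to descend.

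Next I would verify that $\psi$ is a strong groupoid homomorphism. By Proposition \ref{quotient groupoid} applied to both $\mathcal{K}$ and $\mathcal{H}$, composability in either quotient is governed by the same condition in $\mathcal{G}$, namely $\exists (g\mathcal{K})(l\mathcal{K})$ iff $\exists gl$ iff $\exists (g\mathcal{H})(l\mathcal{H})$. This single observation yields both the homomorphism property, since $\psi((g\mathcal{K})(l\mathcal{K}))=\psi(gl\mathcal{K})=gl\mathcal{H}=(g\mathcal{H})(l\mathcal{H})$, and the strong property, since $\exists\,\psi(g\mathcal{K})\psi(l\mathcal{K})$ forces $\exists gl$ and hence $\exists (g\mathcal{K})(l\mathcal{K})$. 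Surjectivity is immediate, as every $g\mathcal{H}$ is the image of $g\mathcal{K}$.

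The crux of the argument, and the step I expect to require the most care, is the computation of $Ker(\psi)$. The identities of $\mathcal{G}/\mathcal{H}$ are exactly the cosets $e\mathcal{H}$ with $e\in\mathcal{G}_0$, since applying Proposition \ref{phg}(i) to the canonical projection gives $d(g\mathcal{H})=d(g)\mathcal{H}$. I would then show that $g\mathcal{H}$ is such an identity precisely when $g\in\mathcal{H}$: writing $g\mathcal{H}=e\mathcal{H}$ and using the coset description together with $e^{-1}=e$ (Remark \ref{propiedades de las identidades}) forces $e=r(g)$ and then $r(g)g=g\in\mathcal{H}$; conversely, for $h\in\mathcal{H}$ one has $h\mathcal{H}=r(h)\mathcal{H}$, an identity. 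Therefore $\psi(g\mathcal{K})\in(\mathcal{G}/\mathcal{H})_0$ iff $g\in\mathcal{H}$, giving $Ker(\psi)=\{g\mathcal{K}\mid g\in\mathcal{H}\}=\mathcal{H}/\mathcal{K}$.

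Finally, Proposition \ref{phg}(iii) (the kernel of a homomorphism is normal) shows $\mathcal{H}/\mathcal{K}=Ker(\psi)\lhd\mathcal{G}/\mathcal{K}$, which is the first assertion; and Theorem \ref{FIT} applied to the surjective strong homomorphism $\psi$ delivers the strong isomorphism $\frac{\mathcal{G}/\mathcal{K}}{\mathcal{H}/\mathcal{K}}=\frac{\mathcal{G}/\mathcal{K}}{Ker(\psi)}\cong\mathcal{G}/\mathcal{H}$, as desired.
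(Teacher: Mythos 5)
Your proposal is correct and follows essentially the same route as the paper: define the natural surjection $g\mathcal{K}\mapsto g\mathcal{H}$, check it is well defined using $\mathcal{K}\subseteq\mathcal{H}$, identify its kernel as $\mathcal{H}/\mathcal{K}$, and apply the First Isomorphism Theorem. You simply spell out a few steps the paper leaves implicit (strongness of the map and the identification of the identities of $\mathcal{G}/\mathcal{H}$), which is a reasonable amount of added detail.
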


\begin{proof}
Define $\varphi: \mathcal{G}/\mathcal{K}\to \mathcal{G}/\mathcal{H}$ by $\varphi(g\mathcal{K})=g\mathcal{H}$. First of all, we show that $\varphi$ is a well defined function. Indeed, if $g\mathcal{K}=l\mathcal{K}$ then $\exists l^{-1}g$ and $l^{-1}g \in \mathcal{K}$. Since $\mathcal{K}<\mathcal{H}$, we have $l^{-1}g \in \mathcal{H}$ and hence $g\mathcal{H}=l\mathcal{H}$. Now,
\begin{align*}
Ker(\varphi)&=\{g\mathcal{K}\mid g\mathcal{H}\in (\mathcal{G}/\mathcal{H})_0\}=\{g\mathcal{K}\mid g\mathcal{H}\in \mathcal{G}_0\mathcal{H}\}=\{g\mathcal{K}\mid g\in \mathcal{H}\}=\mathcal{H}/\mathcal{K}.
\end{align*}
Thus, $\mathcal{H}/\mathcal{K}\lhd \mathcal{G}/\mathcal{K}$ and the Theorem \ref{FIT} implies that,
$$\frac{\mathcal{G}/\mathcal{K}}{\mathcal{H}/\mathcal{K}}\cong \mathcal{G}/\mathcal{H}.$$
\end{proof}

\section{Normal and subnormal series for groupoids}
In this section, we present some applications of the isomorphism theorems of groupoids to normal and subnormal series. In particular, we show that the Jordan-H\"{o}lder Theorem is also fulfilled in the context of  groupoids. First, we introduce the following natural definitions.

\begin{defn} Let $\G$ be a groupoid. Then:
\begin{itemize}
\item A subnormal series of a groupoid $\mathcal{G}$, is a chain of subgroupoids $\mathcal{G}^{(0)}=\mathcal{G}> \mathcal{G}^{(1)}>\cdots >\mathcal{G}^{(n)}$ such that $\mathcal{G}^{(i+1)}$ is normal in $\mathcal{G}^{(i)}$ for $0\leq i\leq n$. The factors of the series are the quotient groupoids $\mathcal{G}^{(i)}/\mathcal{G}^{(i+1)}$. The lenght of the series is the number of strict inclusions. A subnormal series such that $\mathcal{G}^{(i)}$ is normal in $\mathcal{G}$ for all $i$, is called  normal.

\item Let $\mathcal{S}:\mathcal{G}=\mathcal{G}^{(0)}>\mathcal{G}^{(1)}>\cdots >\mathcal{G}^{(n)}$ be a subnormal series. A one-step refinement of this series is any series of the form $\mathcal{G}=\mathcal{G}^{(0)}>\cdots >\mathcal{G}^{(i)}>N>\mathcal{G}^{(i+1)}>\cdots >\mathcal{G}^{(n)}$ or $\mathcal{G}=\mathcal{G}^{(0)}>\cdots > \mathcal{G}^{(n)}>N$, where $N$ is a normal subrgoupoid of $\mathcal{G}^{(i)}$ and $\mathcal{G}^{(i+1)}$ is normal in $N$ (if $i<n$ ). A refinement of a subnormal series $S$ is any subnormal series obtained from $S$ by a finite sequence of one-step refinements. A refinement of S  is called to be proper if it is larger than the length of S.

\item A subnormal series $\mathcal{G}=\mathcal{G}^{(0)}>\mathcal{G}^{(1)}>\cdots >\mathcal{G}^{(n)}=\mathcal{G}_0$ is a composition series if each factor $\mathcal{G}^{(i)}/\mathcal{G}^{(i+1)}$ is simple, that is its only normal subgroupoids are $\G$ and $\G_0,$ and it is
solvable if each factor is abelian.

\end{itemize}
\end{defn}

\begin{rem}  It follows from (iv) of Proposition \ref{png},  that
 if $\mathcal{N}$ is a normal subgroupoid of a groupoid $\mathcal{G}$, every normal subgroupoid of $\mathcal{G}/\mathcal{N}$ is of the form $\mathcal{H}/\mathcal{N}$ where $\mathcal{H}$ is a normal subroupoid of $\mathcal{G}$, which contains $\mathcal{N}$. Thus, if $\mathcal{G}\ne \mathcal{N}$ then $\mathcal{G}/\mathcal{N}$ is simple, if and only if, $\mathcal{N}$ is a maximal element in the set of all the normal subgroupoids $\mathcal{M}$ of $\mathcal{G}$, such that $\mathcal{M}\ne \mathcal{G}$.
\end{rem}

\begin{prop}\label{cs} Let $\G$ be a groupoid. Then:
\begin{enumerate}
\item[(i)]If $\mathcal{G}$ is finite, then it has a composition series.
\item[(ii)] Every refinement of a solvable series is a solvable series.
\item[(iii)] A subnormal series is a composition series, if and only if, it has no proper refinements.
\end{enumerate}
\end{prop}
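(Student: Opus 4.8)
The plan is to handle the three parts separately, in each case adapting the classical group-theoretic argument while keeping two groupoid-specific facts in view: every normal subgroupoid is wide (by Remark \ref{carnor}), so all terms of a subnormal series share the common identity set $\G_0$; and $\G_0$ is itself a normal subgroupoid of every groupoid, since $g^{-1}\mathcal{G}_0 g=\{g^{-1}r(g)g\}=\{d(g)\}\subseteq\G_0$ and is nonempty. For (i), I would build a composition series from the top down. Starting from $\mathcal{G}^{(0)}=\G$, whenever $\mathcal{G}^{(i)}\neq\G_0$ the set of proper normal subgroupoids of $\mathcal{G}^{(i)}$ is nonempty, as it contains $\G_0=(\mathcal{G}^{(i)})_0$. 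Since $\G$ is finite I choose a maximal such subgroupoid and call it $\mathcal{G}^{(i+1)}$; by the Remark preceding this proposition, maximality is equivalent to $\mathcal{G}^{(i)}/\mathcal{G}^{(i+1)}$ being simple. Because $\mathcal{G}^{(i+1)}\subsetneq\mathcal{G}^{(i)}$ and $\G_0$ is the only groupoid at which the construction halts, after finitely many steps the chain reaches $\G_0$, yielding a subnormal series $\G=\mathcal{G}^{(0)}>\cdots>\mathcal{G}^{(n)}=\G_0$ with simple factors, i.e. a composition series.

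For (ii), I first note that the definition ties solvability to series ending at $\G_0$, and this is essential: a terminal one-step refinement below $\G_0$ cannot be proper, because $\G_0$ has no proper wide normal subgroupoid. Hence every refinement is a finite iteration of \emph{interior} one-step refinements, and by induction it suffices to treat a single insertion of $N$ with $\mathcal{G}^{(i+1)}\lhd N\lhd\mathcal{G}^{(i)}$. I must show the two new factors $\mathcal{G}^{(i)}/N$ and $N/\mathcal{G}^{(i+1)}$ are abelian, given that $\mathcal{G}^{(i)}/\mathcal{G}^{(i+1)}$ is. Theorem \ref{TIT} gives $\mathcal{G}^{(i)}/N\cong(\mathcal{G}^{(i)}/\mathcal{G}^{(i+1)})/(N/\mathcal{G}^{(i+1)})$, an epimorphic image of an abelian groupoid, hence abelian by Remark \ref{subal}; and $N/\mathcal{G}^{(i+1)}$ is, via the correspondence theorem, a subgroupoid of the abelian groupoid $\mathcal{G}^{(i)}/\mathcal{G}^{(i+1)}$, so it too is abelian by Remark \ref{subal}. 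Since all untouched factors remain abelian, the refinement is solvable.

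For (iii), I argue both implications through Proposition \ref{png}(iv) applied to each quotient map $j_i\colon\mathcal{G}^{(i)}\to\mathcal{G}^{(i)}/\mathcal{G}^{(i+1)}$, under which the normal subgroupoids $N$ with $\mathcal{G}^{(i+1)}\subseteq N\subseteq\mathcal{G}^{(i)}$ correspond bijectively to the normal subgroupoids of the factor, $\mathcal{G}^{(i)}$ and $\mathcal{G}^{(i+1)}$ corresponding to the whole factor and to its identity subgroupoid respectively. If the series is a composition series, each factor is simple, so no $N$ with $\mathcal{G}^{(i+1)}\subsetneq N\subsetneq\mathcal{G}^{(i)}$ exists; hence no one-step refinement strictly lengthens the series and there is no proper refinement. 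Conversely, if the series admits no proper refinement, then its last term must be $\G_0$ — otherwise appending $\G_0\lhd\mathcal{G}^{(n)}$ would properly lengthen it — and each factor must be simple, because a proper nontrivial normal subgroupoid of $\mathcal{G}^{(i)}/\mathcal{G}^{(i+1)}$ would pull back to an $N$ with $\mathcal{G}^{(i+1)}\subsetneq N\subsetneq\mathcal{G}^{(i)}$ and $\mathcal{G}^{(i+1)}\lhd N\lhd\mathcal{G}^{(i)}$ (normality of $\mathcal{G}^{(i+1)}$ descends to $N$ since $N\subseteq\mathcal{G}^{(i)}$ and all terms have identity set $\G_0$), giving a proper one-step refinement. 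Thus the series is a composition series.

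The algebraic verifications are routine; the main obstacle I anticipate is the bookkeeping in (iii): ensuring the correspondence theorem is applied to the correct quotient map, that a proper intermediate $N$ genuinely produces a strictly longer series, and that the relation $\mathcal{G}^{(i+1)}\lhd N$ required for a legal one-step refinement follows from $\mathcal{G}^{(i+1)}\lhd\mathcal{G}^{(i)}$ together with $N\subseteq\mathcal{G}^{(i)}$ and the common identity set $\G_0$. A secondary point worth stating explicitly is the role of the ending-at-$\G_0$ convention in (ii), without which the appending case would make the statement false.
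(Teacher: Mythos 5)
Your proof is correct and follows essentially the same route as the paper's: descending maximal normal subgroupoids for (i), Remark \ref{subal} together with Theorem \ref{TIT} for (ii), and the correspondence theorem of Proposition \ref{png}(iv) for (iii). You merely fill in details the paper leaves implicit (e.g.\ that $\G_0$ is always a proper normal subgroupoid guaranteeing the maximal choice exists, and that a series not ending at $\G_0$ admits the appending refinement), so no substantive difference in approach.
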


\begin{proof}
(i) Let $\mathcal{G}^{(1)}$ be  a maximal normal subgroupoid of $\mathcal{G}$. Then, $\mathcal{G}/\mathcal{G}^{(1)}$ is simple by (iv) of Proposition \ref{png}. Let $\mathcal{G}^{(2)}$ be a maximal normal subgroupoid of $\mathcal{G}^{(1)}$, and so on. Now, since $\mathcal{G}$ is finite, this process must end with $\mathcal{G}^{(n)}=\mathcal{G}_0$. Thus, $\mathcal{G}=\mathcal{G}^{(0)}>\mathcal{G}^{(1)}>\cdots >\mathcal{G}^{(n)}=\mathcal{G}_0$ is a composition series.\\

(ii) Here we use Remark \ref{subal} to observe that if  $\mathcal{G}^{(i)}/\mathcal{G}^{(i+1)}$ is abelian and $\mathcal{G}^{(i+1)}\lhd \mathcal{H}\lhd \mathcal{G}^{(i)}$, then $\mathcal{H}/\mathcal{G}^{(i+1)}$ is abelian since it is a subgroupoid of $\mathcal{G}^{(i)}/\mathcal{G}^{(i+1)}$. Moreover, $\mathcal{G}^{(i)}/\mathcal{H}$ is abelian since it is isomorphic to $(\mathcal{G}^{(i)}/\mathcal{G}^{(i+1)})/(\mathcal{H}/\mathcal{G}^{(i+1)})$ by Theorem \ref{TIT}.\\

(iii) It follows from (iv) of Proposition \ref{png} and that a subnormal series $\mathcal{G}=\mathcal{G}^{(0)}>\mathcal{G}^{(1)}>\cdots >\mathcal{G}^{(n)}=\mathcal{G}_0$ has a proper refinement, if and only if, there  is a subgroupoid $\mathcal{H}$ such that for some $i, \mathcal{G}^{(i+1)}\lhd \mathcal{H}\lhd \mathcal{G}^{(i)}$ with $\mathcal{H}$ proper in $\mathcal{G}^{(i)}$ and $\mathcal{G}^{(i+1)}$ proper in $\mathcal{H}$.
\end{proof}

\begin{defn}
Two subnormal series $\mathcal{S}$ and $\mathcal{T}$ of a groupoid $\mathcal{G}$ are equivalent, if there is a one-to-one correspondence between the nontrivial factors of  $\mathcal{S}$ and the nontrivial factors of $\mathcal{T}$, such that the corresponding factors are isomorphic groupoids.
\end{defn}

\begin{lem}\label{BZ}
If $\mathcal{S}$ is a composition series of a groupoid $\mathcal{G}$, then any refinement of $\mathcal{S}$ is equivalent to $\mathcal{S}$.
\end{lem}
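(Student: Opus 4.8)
The plan is to analyse a single one-step refinement and then induct on the number of one-step refinements used to produce a given refinement $\mathcal{T}$ of $\mathcal{S}$. The guiding observation is that repeating a term $\mathcal{A}$ in a series contributes only a factor $\mathcal{A}/\mathcal{A}\cong \mathcal{A}_0$, a groupoid of identities, i.e.\ a trivial factor; and since the equivalence of two subnormal series compares only the \emph{nontrivial} factors, such duplications are invisible to it. Hence it suffices to show that no one-step refinement of $\mathcal{S}$ can do anything beyond duplicating a term.

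Write $\mathcal{S}\colon \mathcal{G}=\mathcal{G}^{(0)}>\cdots >\mathcal{G}^{(n)}=\mathcal{G}_0$, so that each factor $\mathcal{G}^{(i)}/\mathcal{G}^{(i+1)}$ is simple. The rigidity step I would prove is: if $\mathcal{G}^{(i+1)}\lhd N\lhd \mathcal{G}^{(i)}$, then $N=\mathcal{G}^{(i)}$ or $N=\mathcal{G}^{(i+1)}$. For this I apply the Correspondence Theorem (Proposition \ref{png}(iv)) to the canonical strong epimorphism $\mathcal{G}^{(i)}\to \mathcal{G}^{(i)}/\mathcal{G}^{(i+1)}$, whose kernel is $\mathcal{G}^{(i+1)}$: normal subgroupoids $N$ with $\mathcal{G}^{(i+1)}\subseteq N\subseteq \mathcal{G}^{(i)}$ correspond bijectively to normal subgroupoids of the simple quotient, which are only the quotient itself and its identities, and these pull back to $\mathcal{G}^{(i)}$ and $\mathcal{G}^{(i+1)}$. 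For the boundary case of appending $N$ with $N\lhd \mathcal{G}_0$, I would note that a normal subgroupoid is wide (Remark \ref{carnor}) and that a wide subgroupoid of $\mathcal{G}_0$ must equal $\mathcal{G}_0$, forcing $N=\mathcal{G}_0$. In every case the inserted term coincides with a neighbour.

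To run the induction cleanly I would carry the invariant $(\star)$: a subnormal series satisfies $(\star)$ if each of its consecutive pairs $(\mathcal{A},\mathcal{B})$ either has $\mathcal{A}=\mathcal{B}$ or equals $(\mathcal{G}^{(i)},\mathcal{G}^{(i+1)})$ for some $i$, and moreover the multiset of its nontrivial factors coincides with that of $\mathcal{S}$. Plainly $\mathcal{S}$ satisfies $(\star)$. A one-step refinement inserts $N$ into some consecutive pair $(\mathcal{A},\mathcal{B})$ with $\mathcal{B}\lhd N\lhd \mathcal{A}$; when $\mathcal{A}=\mathcal{B}$ this forces $N=\mathcal{A}$, and when $(\mathcal{A},\mathcal{B})=(\mathcal{G}^{(i)},\mathcal{G}^{(i+1)})$ the rigidity step forces $N\in\{\mathcal{A},\mathcal{B}\}$. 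In both cases the new term merely duplicates $\mathcal{A}$ or $\mathcal{B}$, so exactly one extra trivial factor appears while the nontrivial factors are unchanged; hence $(\star)$ is preserved.

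Since a refinement $\mathcal{T}$ of $\mathcal{S}$ is obtained by finitely many one-step refinements, induction shows $\mathcal{T}$ satisfies $(\star)$, so its nontrivial factors, counted with multiplicity, are precisely the factors $\mathcal{G}^{(i)}/\mathcal{G}^{(i+1)}$ of $\mathcal{S}$; pairing each with itself yields the required one-to-one correspondence of isomorphic nontrivial factors, and therefore $\mathcal{T}$ is equivalent to $\mathcal{S}$. I expect the main obstacle to be the rigidity step, namely making the Correspondence Theorem do the work of excluding any intermediate normal subgroupoid strictly between two consecutive composition terms (this is essentially the content of Proposition \ref{cs}(iii)), together with the boundary bookkeeping that $\mathcal{G}_0$ admits no proper normal subgroupoid, so that the append case also collapses to a duplication.
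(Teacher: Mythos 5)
Your proposal is correct and follows essentially the same route as the paper: the paper simply invokes Proposition \ref{cs}(iii) to conclude that a composition series admits no proper refinement, so any refinement only duplicates terms and leaves the nontrivial factors unchanged. Your rigidity step via the Correspondence Theorem is just an explicit re-derivation of that proposition (as you note yourself), and the induction on one-step refinements is the bookkeeping the paper leaves implicit.
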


\begin{proof}
Let $\mathcal{S}: \mathcal{G}=\mathcal{G}^{(0)}>\mathcal{G}^{(1)}>\cdots >\mathcal{G}^{(n)}=\mathcal{G}_0$. By Proposition \ref{cs} (iii), $\mathcal{S}$ has no proper refinement. Thus, the only possible refinements of $\mathcal{S}$ are obtained by inserting additional copies of each $\mathcal{G}^{(i)}$. Whence, any refinement of $\mathcal{S}$ has exactly the same nontrivial factors as $\mathcal{S}$. Therefore, it is equivalent to $\mathcal{S}$.
\end{proof}

\begin{lem}[Zassenhaus Theorem for groupoids]\label{ZG}
Let $\mathcal{A}^*, \mathcal{A}, \mathcal{B}^*, \mathcal{B}$ be wide subgroupoids of a groupoid $\mathcal{G}$ such that:
\begin{itemize}
\item $\mathcal{A}^*$ is normal in $\mathcal{A},$
\item  $\mathcal{B}^*$ is normal in $\mathcal{B}.$

\end{itemize}
Then $\mathcal{A}^*(\mathcal{A}\cap \mathcal{B})$ and $\mathcal{B}^*(\mathcal{A}\cap \mathcal{B})$ are subgroupoids of $\G$ such that:
\begin{enumerate}
\item[(i)] $\mathcal{A}^*(\mathcal{A}\cap \mathcal{B}^*)$ is a normal subgroupoid of $\mathcal{A}^*(\mathcal{A}\cap \mathcal{B})$;
\item[(ii)] $\mathcal{B}^*(\mathcal{A}^*\cap \mathcal{B})$ is a normal subgroupoid of $\mathcal{B}^*(\mathcal{A}\cap \mathcal{B})$;
\item[(iii)] $\mathcal{A}^*(\mathcal{A}\cap \mathcal{B})/\mathcal{A}^*(\mathcal{A}\cap \mathcal{B}^*)\cong \mathcal{B}^*(\mathcal{A}\cap \mathcal{B})/\mathcal{B}^*(\mathcal{A}^*\cap \mathcal{B})$.
\end{enumerate}
\end{lem}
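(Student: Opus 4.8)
The plan is to reduce all three assertions to a single object. Set $\mathcal{D}=(\mathcal{A}^*\cap\mathcal{B})(\mathcal{A}\cap\mathcal{B}^*)$ and aim to prove that both quotients in (iii) are isomorphic to $\frac{\mathcal{A}\cap\mathcal{B}}{\mathcal{D}}$. The decisive observation is that $\mathcal{D}$ is symmetric under interchanging the pairs $(\mathcal{A},\mathcal{A}^*)$ and $(\mathcal{B},\mathcal{B}^*)$, so once the $\mathcal{A}$-side isomorphism is established, the $\mathcal{B}$-side follows by relabelling and (iii) is immediate.

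First I would settle the structural claims. Since $\mathcal{A},\mathcal{A}^*,\mathcal{B},\mathcal{B}^*$ are all wide, every intersection among them is again a wide subgroupoid (because $(\mathcal{X}\cap\mathcal{Y})_0=\mathcal{X}_0\cap\mathcal{Y}_0=\G_0$), and the products $\mathcal{A}^*(\mathcal{A}\cap\mathcal{B})$ and $\mathcal{B}^*(\mathcal{A}\cap\mathcal{B})$ are subgroupoids by (i) of Proposition \ref{SIT}, since $\mathcal{A}^*$ and $\mathcal{B}^*$ are normal. Next, viewing $\mathcal{A}\cap\mathcal{B}$ as a wide subgroupoid of $\mathcal{A}$ and of $\mathcal{B}$, part (iii) of Proposition \ref{SIT} gives $\mathcal{A}^*\cap\mathcal{B}=\mathcal{A}^*\cap(\mathcal{A}\cap\mathcal{B})\lhd\mathcal{A}\cap\mathcal{B}$ and $\mathcal{A}\cap\mathcal{B}^*=(\mathcal{A}\cap\mathcal{B})\cap\mathcal{B}^*\lhd\mathcal{A}\cap\mathcal{B}$; since normal subgroupoids are wide (Remark \ref{carnor}), part (ii) of Proposition \ref{SIT} then yields $\mathcal{D}\lhd\mathcal{A}\cap\mathcal{B}$.

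The heart of the argument is the $\mathcal{A}$-side. Here I would establish two identities inside $\mathcal{A}$, valid with the partial product: the factorization $\mathcal{A}^*(\mathcal{A}\cap\mathcal{B}^*)(\mathcal{A}\cap\mathcal{B})=\mathcal{A}^*(\mathcal{A}\cap\mathcal{B})$, and the modular law $\mathcal{A}^*(\mathcal{A}\cap\mathcal{B}^*)\cap(\mathcal{A}\cap\mathcal{B})=\mathcal{D}$ (for the nontrivial inclusion, if $z=a^*y$ with $a^*\in\mathcal{A}^*$ and $y\in\mathcal{A}\cap\mathcal{B}^*$ lies in $\mathcal{A}\cap\mathcal{B}$, then $a^*=zy^{-1}\in\mathcal{A}^*\cap(\mathcal{A}\cap\mathcal{B})=\mathcal{A}^*\cap\mathcal{B}$, so $z\in\mathcal{D}$). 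To obtain the normality (i), I would use that $\mathcal{A}^*\lhd\mathcal{A}^*(\mathcal{A}\cap\mathcal{B})$ and that, by Theorem \ref{second} applied to the ambient groupoid $\mathcal{A}^*(\mathcal{A}\cap\mathcal{B})$ with wide subgroupoid $\mathcal{A}\cap\mathcal{B}$ and normal subgroupoid $\mathcal{A}^*$, one has a strong isomorphism $\frac{\mathcal{A}\cap\mathcal{B}}{\mathcal{A}^*\cap\mathcal{B}}\cong\frac{\mathcal{A}^*(\mathcal{A}\cap\mathcal{B})}{\mathcal{A}^*}$ carrying $\mathcal{D}/(\mathcal{A}^*\cap\mathcal{B})$ onto $\mathcal{A}^*(\mathcal{A}\cap\mathcal{B}^*)/\mathcal{A}^*$. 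Since $\mathcal{D}\lhd\mathcal{A}\cap\mathcal{B}$, the quotient $\mathcal{D}/(\mathcal{A}^*\cap\mathcal{B})$ is normal by Theorem \ref{TIT}, and transporting normality through the correspondence of Proposition \ref{png}(iv) for the projection $\mathcal{A}^*(\mathcal{A}\cap\mathcal{B})\to\mathcal{A}^*(\mathcal{A}\cap\mathcal{B})/\mathcal{A}^*$ (whose kernel $\mathcal{A}^*$ is contained in $\mathcal{A}^*(\mathcal{A}\cap\mathcal{B}^*)$) gives exactly (i). With (i) in hand, Theorem \ref{second}, now applied with ambient groupoid $\mathcal{A}^*(\mathcal{A}\cap\mathcal{B})$, wide subgroupoid $\mathcal{A}\cap\mathcal{B}$ and normal subgroupoid $\mathcal{A}^*(\mathcal{A}\cap\mathcal{B}^*)$, together with the two identities above, delivers $\frac{\mathcal{A}^*(\mathcal{A}\cap\mathcal{B})}{\mathcal{A}^*(\mathcal{A}\cap\mathcal{B}^*)}\cong\frac{\mathcal{A}\cap\mathcal{B}}{\mathcal{D}}$.

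Finally, interchanging the roles of the two pairs repeats the previous paragraph verbatim to produce (ii) and the companion isomorphism $\frac{\mathcal{B}^*(\mathcal{A}\cap\mathcal{B})}{\mathcal{B}^*(\mathcal{A}^*\cap\mathcal{B})}\cong\frac{\mathcal{A}\cap\mathcal{B}}{\mathcal{D}}$, and composing the two isomorphisms gives (iii). I expect the main obstacle to be the bookkeeping in the partial-product setting: verifying the modular-law intersection identity and the factorization while checking that every product written down is actually composable and that wideness is preserved at each step, so that the second and third isomorphism theorems are genuinely applicable. Building the isomorphisms through those already-strong theorems, rather than through an ad hoc map, also sidesteps the separate verification that a hand-built homomorphism is strong, which is a genuine difficulty in the groupoid context.
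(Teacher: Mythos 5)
Your proposal is correct in substance, but it reaches the key normality and isomorphism statements by a genuinely different route from the paper. The paper works with one explicit map: it defines $\tau\colon \mathcal{A}^*(\mathcal{A}\cap\mathcal{B})\ni ac\mapsto \mathcal{D}c\in(\mathcal{A}\cap\mathcal{B})/\mathcal{D}$, checks directly that it is well defined and a strong epimorphism, identifies $Ker(\tau)=\mathcal{A}^*(\mathcal{A}\cap\mathcal{B}^*)$, and then gets (i) from ``kernels are normal'' and (iii) from one application of the First Isomorphism Theorem on each side. You instead avoid any ad hoc map and derive everything from the already-established machinery: the modular identity $(\mathcal{A}\cap\mathcal{B})\cap\mathcal{A}^*(\mathcal{A}\cap\mathcal{B}^*)=\mathcal{D}$ and the factorization $\mathcal{A}^*(\mathcal{A}\cap\mathcal{B}^*)(\mathcal{A}\cap\mathcal{B})=\mathcal{A}^*(\mathcal{A}\cap\mathcal{B})$, two applications of Theorem \ref{second}, Theorem \ref{TIT}, and transport of normality through the correspondence of Proposition \ref{png}(iv). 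Both set-ups share the same opening (the normality of $\mathcal{D}$ in $\mathcal{A}\cap\mathcal{B}$ via Proposition \ref{SIT}) and, in fact, the same decisive computation: your verification of the nontrivial inclusion of the modular law ($a^*=zy^{-1}\in(\mathcal{A}\cap\mathcal{B})\cap\mathcal{A}^*=\mathcal{A}^*\cap\mathcal{B}\subseteq\mathcal{D}$) is exactly the paper's well-definedness check for $\tau$. What your route buys is precisely what you say: no separate verification that a hand-built homomorphism is strong or well defined, since the Second and Third Isomorphism Theorems already carry that burden. What it costs is extra bookkeeping that the paper's single map makes unnecessary: you must verify the product-set identities $\mathcal{A}^*\mathcal{D}=\mathcal{A}^*(\mathcal{A}\cap\mathcal{B}^*)$ and $(\mathcal{A}\cap\mathcal{B})\mathcal{A}^*(\mathcal{A}\cap\mathcal{B}^*)=\mathcal{A}^*(\mathcal{A}\cap\mathcal{B})$ (using wideness to insert identities such as $r(c)\in\mathcal{A}\cap\mathcal{B}^*$), and you must justify that the isomorphism $\frac{\mathcal{A}\cap\mathcal{B}}{\mathcal{A}^*\cap\mathcal{B}}\cong\frac{\mathcal{A}^*(\mathcal{A}\cap\mathcal{B})}{\mathcal{A}^*}$ really carries $\mathcal{D}/(\mathcal{A}^*\cap\mathcal{B})$ onto $\mathcal{A}^*(\mathcal{A}\cap\mathcal{B}^*)/\mathcal{A}^*$ --- these are routine but currently only asserted, so they are the places to write out carefully if you turn the sketch into a full proof.
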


\begin{proof}
(i) Since $\mathcal{B}^*$ is normal in $\mathcal{B}$, $\mathcal{A} \cap \mathcal{B}^*=(\mathcal{A} \cap \mathcal{B})\cap \mathcal{B}^*$ is a normal subgroupoid of $\mathcal{A} \cap \mathcal{B}$  thanks to (iii)  of Proposition \ref{SIT}; similarly $\mathcal{A}^*\cap \mathcal{B}$ is normal in $\mathcal{A} \cap \mathcal{B}$. Then, $\mathcal{D}=(\mathcal{A}^*\cap \mathcal{B})(\mathcal{A} \cap \mathcal{B}^*)$ is a normal subgroupoid of $\mathcal{A} \cap \mathcal{B}$ by (ii) of Proposition \ref{SIT}. Also, by this same Proposition we have that $\mathcal{A}^*(\mathcal{A}\cap \mathcal{B})$ and $\mathcal{B}^*(\mathcal{A}\cap \mathcal{B})$ are subgroupoids of $\mathcal{A}$ and $\mathcal{B}$ respectively.
Now, we define
$$\tau: \mathcal{A}^*(\mathcal{A}\cap \mathcal{B})\ni ac\mapsto \tau(ac)=\mathcal{D}c\in(\mathcal{A}\cap \mathcal{B})/\mathcal{D},$$
 for all $a\in \mathcal{A}^*, c\in \mathcal{A}\cap \mathcal{B}$. The map $\tau$ is well defined since $ac=a_1c_1$ with $a,a_1\in \mathcal{A}^*; c,c_1\in \mathcal{A}\cap \mathcal{B}$, implies  
that,
$$c_1c^{-1}
=a_1^{-1}a\in (\mathcal{A}\cap \mathcal{B})\cap \mathcal{A}^*=\mathcal{A}^*\cap \mathcal{B}\subseteq \mathcal{D},$$ whence $\mathcal{D}c_1=\mathcal{D}c$. The map $\tau$ is clearly a strong epimorphism, %
and the equality $Ker(\tau)=\mathcal{A}^*(\mathcal{A} \cap \mathcal{B}^*)$ is shown in an analogous way to the group case. \\
Thus, Proposition \ref{phg} (iv),  implies that  $\mathcal{A}^*(\mathcal{A} \cap \mathcal{B}^*)$ is normal in $\mathcal{A}^*(\mathcal{A}\cap \mathcal{B})$ and by the first isomorphism theorem we get $\mathcal{A}^*(\mathcal{A} \cap \mathcal{B})/\mathcal{A}^*(\mathcal{A} \cap \mathcal{B}^*)\cong (\mathcal{A}\cap \mathcal{B})/\mathcal{D}$.\\
A symmetric argument shows that $\mathcal{B}^*(\mathcal{B} \cap \mathcal{A}^*)$ is normal in $\mathcal{B}^*(\mathcal{A}\cap \mathcal{B})$ and $\mathcal{B}^*(\mathcal{A} \cap \mathcal{B})/\mathcal{B}^*(\mathcal{A}^* \cap \mathcal{B})\cong (\mathcal{A}\cap \mathcal{B})/\mathcal{D}$. Whence (iii) follows.
\end{proof}

\begin{prop}[Schreier Theorem for groupoids]\label{SG}
Any two subnormal (resp. normal) series of a groupoid $\mathcal{G}$ have subnormal (resp. normal) refinement, which are equivalent.
\end{prop}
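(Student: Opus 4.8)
The plan is to mimic the classical group-theoretic proof of Schreier's refinement theorem, using the Zassenhaus Lemma (Lemma \ref{ZG}) as the engine for producing the refinements. Suppose we are given two subnormal series
\begin{equation*}
\mathcal{S}:\ \mathcal{G}=\mathcal{G}^{(0)}>\mathcal{G}^{(1)}>\cdots>\mathcal{G}^{(n)}=\mathcal{G}_0,
\qquad
\mathcal{T}:\ \mathcal{G}=\mathcal{H}^{(0)}>\mathcal{H}^{(1)}>\cdots>\mathcal{H}^{(m)}=\mathcal{G}_0.
\end{equation*}
The first step is to interpolate, between each consecutive pair $\mathcal{G}^{(i)}\rhd\mathcal{G}^{(i+1)}$, the chain of subgroupoids obtained by intersecting with the terms of $\mathcal{T}$: concretely I would set
\begin{equation*}
\mathcal{G}^{(i,j)}=\mathcal{G}^{(i+1)}\bigl(\mathcal{G}^{(i)}\cap \mathcal{H}^{(j)}\bigr),\qquad 0\le j\le m,
\end{equation*}
so that $\mathcal{G}^{(i,0)}=\mathcal{G}^{(i)}$ and $\mathcal{G}^{(i,m)}=\mathcal{G}^{(i+1)}$ (using that the series end at $\mathcal{G}_0$ and begin at $\mathcal{G}$). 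Symmetrically I would set $\mathcal{H}^{(j,i)}=\mathcal{H}^{(j+1)}(\mathcal{H}^{(j)}\cap\mathcal{G}^{(i)})$. This produces a refinement of $\mathcal{S}$ with roughly $nm$ terms and likewise a refinement of $\mathcal{T}$.

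The second and central step is to verify that each inserted inclusion is genuinely normal and that the corresponding factors match up. This is exactly where the Zassenhaus Lemma does the work: applying Lemma \ref{ZG} with $\mathcal{A}^*=\mathcal{G}^{(i+1)}$, $\mathcal{A}=\mathcal{G}^{(i)}$, $\mathcal{B}^*=\mathcal{H}^{(j+1)}$, $\mathcal{B}=\mathcal{H}^{(j)}$ gives simultaneously that $\mathcal{G}^{(i,j+1)}\lhd\mathcal{G}^{(i,j)}$, that $\mathcal{H}^{(j,i+1)}\lhd\mathcal{H}^{(j,i)}$, and the isomorphism
\begin{equation*}
\mathcal{G}^{(i,j)}/\mathcal{G}^{(i,j+1)}\ \cong\ \mathcal{H}^{(j,i)}/\mathcal{H}^{(j,i+1)}.
\end{equation*}
Thus the $(i,j)$-factor of the refinement of $\mathcal{S}$ is isomorphic to the $(j,i)$-factor of the refinement of $\mathcal{T}$, and the map $(i,j)\mapsto(j,i)$ furnishes the required one-to-one correspondence between the factors, establishing equivalence of the two refinements.

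Two points need care, and these are where I expect the main obstacles. The first is checking that the hypotheses of Lemma \ref{ZG} are actually met: the lemma requires the four subgroupoids to be \emph{wide}, so I must confirm that each $\mathcal{G}^{(i)}$, $\mathcal{H}^{(j)}$ appearing in the series is wide, or else reduce to that case. Inspecting the definition of normal subgroupoid and Remark \ref{carnor}, normality forces wideness of the normal term, so along a subnormal series wideness does propagate, but the argument should be spelled out. The second obstacle is purely bookkeeping: many of the inserted inclusions will not be strict, so the refinements have repeated terms and the correspondence is really between the \emph{nontrivial} factors only; I would handle this exactly as in the classical proof by noting that a factor $\mathcal{G}^{(i,j)}/\mathcal{G}^{(i,j+1)}$ is trivial precisely when its Zassenhaus-partner $\mathcal{H}^{(j,i)}/\mathcal{H}^{(j,i+1)}$ is trivial, so deleting repetitions on both sides preserves the bijection. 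Finally, for the normal (as opposed to merely subnormal) case, I would observe that if $\mathcal{S}$ and $\mathcal{T}$ are normal series in $\mathcal{G}$ then each $\mathcal{G}^{(i,j)}$, being a product of subgroupoids normal in $\mathcal{G}$, is again normal in $\mathcal{G}$ by (ii) of Proposition \ref{SIT}, so the constructed refinements are normal as well.
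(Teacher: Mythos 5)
Your proposal is correct and is exactly the argument the paper intends: the paper's proof is a one-line citation of the Zassenhaus Lemma (Lemma \ref{ZG}), Proposition \ref{SIT}, and Proposition \ref{producto HK=KH}, and your interpolation $\mathcal{G}^{(i,j)}=\mathcal{G}^{(i+1)}(\mathcal{G}^{(i)}\cap\mathcal{H}^{(j)})$ with the $(i,j)\mapsto(j,i)$ matching of factors is the standard expansion of that citation. Your attention to the wideness hypothesis (which does propagate down a subnormal series, since normality forces equality of identity sets) and to discarding trivial factors supplies details the paper leaves implicit.
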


\begin{proof}
It follows from Lemma \ref{ZG}, (ii) of Proposition \ref{SIT}, and Proposition \ref{producto HK=KH} (1).
\end{proof}

\begin{prop}[Jordan-H\"{o}lder Theorem for groupoids]
Any two composition series of a groupoid $\mathcal{G}$ are equivalent.
\end{prop}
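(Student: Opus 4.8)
The plan is to mimic the standard group-theoretic proof of the Jordan–Hölder theorem, which deduces it as an immediate corollary of the Schreier refinement theorem (Proposition \ref{SG}) together with the characterization of composition series as those with no proper refinements (Lemma \ref{BZ} and Proposition \ref{cs} (iii)). I would begin with two composition series
$$\mathcal{S}: \mathcal{G}=\mathcal{G}^{(0)}>\mathcal{G}^{(1)}>\cdots >\mathcal{G}^{(n)}=\mathcal{G}_0,\qquad \mathcal{T}: \mathcal{G}=\mathcal{H}^{(0)}>\mathcal{H}^{(1)}>\cdots >\mathcal{H}^{(m)}=\mathcal{G}_0.$$
Since both are subnormal series of $\mathcal{G}$, the Schreier Theorem for groupoids (Proposition \ref{SG}) guarantees that they admit refinements $\mathcal{S}'$ and $\mathcal{T}'$ that are equivalent to each other.

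\textbf{Reducing refinements back to the originals.} The key observation is that $\mathcal{S}$ and $\mathcal{T}$ are themselves composition series, so by Proposition \ref{cs} (iii) neither admits a \emph{proper} refinement. Hence any refinement of $\mathcal{S}$ (in particular $\mathcal{S}'$) can differ from $\mathcal{S}$ only by the insertion of repeated terms, i.e.\ trivial factors of the form $\mathcal{G}^{(i)}/\mathcal{G}^{(i)}\cong \mathcal{G}_0$; by Lemma \ref{BZ}, $\mathcal{S}'$ is equivalent to $\mathcal{S}$. The identical argument applied to $\mathcal{T}$ shows that $\mathcal{T}'$ is equivalent to $\mathcal{T}$. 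Since equivalence of subnormal series is defined by a one-to-one correspondence of \emph{nontrivial} factors through groupoid isomorphism, it is visibly reflexive, symmetric, and transitive.

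\textbf{Chaining the equivalences.} Combining the three equivalences via transitivity yields
$$\mathcal{S}\ \sim\ \mathcal{S}'\ \sim\ \mathcal{T}'\ \sim\ \mathcal{T},$$
so $\mathcal{S}$ and $\mathcal{T}$ are equivalent, which is the desired conclusion. The entire argument is therefore a short bookkeeping deduction from results already established.

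\textbf{Expected obstacle.} Because the heavy lifting is carried by Proposition \ref{SG}, I do not anticipate a substantial obstacle in this final proof itself. The only point requiring mild care is confirming that the equivalence relation on series is genuinely transitive in the groupoid setting—composing the two bijections on nontrivial factors and checking that isomorphism of quotient groupoids composes—but this is routine since groupoid isomorphisms compose to groupoid isomorphisms. The conceptual subtleties (that one-step refinements behave well, that the Zassenhaus lemma holds, that factors of composition series are simple) have all been absorbed into the earlier lemmas, so here the plan is simply to invoke them in the correct order.
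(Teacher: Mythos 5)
Your proposal is correct and follows exactly the paper's argument: the authors likewise deduce the result from the Schreier Theorem for groupoids (Proposition \ref{SG}) together with Lemma \ref{BZ}, and your write-up simply spells out the chain of equivalences that their one-line proof leaves implicit.
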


\begin{proof}
It follows from Proposition \ref{SG} and  Lemma \ref{BZ}.
\end{proof}

\subsection{Some remarks on the equivalence between inductive groupoids and inverse semigroups}

Recall that an inverse semigroup, is a semigroup $S$ such that for any $s\in S$ there is a unique $s^*\in S$ such that $s^*=s^*ss^*$ and $s=ss^*s.$
Now, let $X$ be a set and  consider the inverse semigroup $$\G(X)=\{f\colon A\to B\mid A\subseteq X, B\subseteq X\,\,\,\text{and $f$ is a bijection} \}.$$
 We recall the following.
\begin{defn} Let $S$ be a semigroup. An action of $S$ on $X$ is a semigroup homomorphism $\phi:S\to \G(X).$
\end{defn}
It follows from \cite[Theorem 4.2]{EX}, that partial actions of a group $G$ on $X$ are in one-to-one correspondence with  actions of $E(G)$ on $X,$ where $E(G)$ is the  semigroup  generated by the symbols $\{[g] \mid g \in G \}$ under
the following relations: For $g,h \in G$,
$$[g^{-1}][g][h] = [g^{-1}][gh],\,\,\, [g][h][h^{-1}] = [gh][h^{-1}],\,\,{\rm and}\,\, [g][1] = [g].$$ The semigroup $E(G)$ was introduced in \cite{EX}, and it is called the \emph{Exel semigroup of $G$}.
\begin{rem} \label{remE} Now we present some facts about $E(G).$
\begin{enumerate}
\item The semigroup $E(G)$ is a monoid with $1_{E(G)}=[1].$
\item  \cite[Proposition 2.5]{EX} For each $g\in G$ let $\gamma_g = [g][g^{-1}]$. Then, $\gamma_g$ is an idempotent of $E(G)$,
each element $\alpha \in E(G)$ may be uniquely written (up to the order of the $s_i$'s)
as
\begin{equation}\label{uniquely}\alpha = \gamma_{s_1} \gamma_{s_2} \ldots \gamma_{s_n} [g]\end{equation}
for some $s_1,s_2,\ldots,s_n, g\in G,$ with $ g\neq  s_i\neq s_j \neq g, i\neq j$ and $s_i\neq 1,$ for $i\in \{1,\cdots, n\}.$
From \eqref{uniquely}, follows that any idempotent in $E(G)$ has the form  $\gamma_{s_1} \gamma_{s_2} \ldots \gamma_{s_n} $ for some (uniquely) $s_1,s_2,\ldots,s_n, g\in G.$
\item \cite[Theorem 3.4]{EX} The set $E(G)$ is an inverse semigroup. In particular, the idempotents of $E(G)$ commute (see \cite[Theorem 3]{L}).
\end{enumerate}
\end{rem}

Given an inverse semigroup $S$ and $s,t\in S$, one defines  the {\it restricted product}
\begin{center}$s\cdot t$ exists if and only if  $s^*s=tt^*.$ \end{center} It follows from \cite[Proposition 3.1.4]{L}  and \cite[Proposition 4.1.1]{L}, that $(S, \cdot, \leq) $ is an inductive  groupoid (see \cite[p. 108]{L}), where $\leq$ is the natural partial order defined on $S.$ Then, by using the restricted product in $\G(X)$ we have that  $$\G^2(X)=\{(f,g)\in \G(X)\times \G(X)\mid {\rm im} g= {\rm dom} f\},$$
and $\G(X)$ is a groupoid with the product given by composition of maps restricted to $\G^2(X).$ Moreover, $\G(X)_0=\{{\rm id}_A\mid A\subseteq X\}.$

With respect to the semigroup $E(G)$ we have the following result.

\begin{prop}\label{idemp} Let $\alpha = \gamma_{s_1} \gamma_{s_2} \ldots \gamma_{s_n} [g]$ and $\beta= \gamma_{t_1} \gamma_{t_2} \ldots \gamma_{t_m} [l]$ where $s_1,s_2,\ldots,s_n, t_1,t_2,\ldots, t_m,g,l\in G$ are  as in \eqref{uniquely} of Remark \ref{remE}. Then $\alpha\alpha^*=\beta^*\beta,$ if and only if, $\{s_1,s_2, \cdots, s_n, g\}=\{t_1,t_2, \cdots, t_n, l^{-1}\}.$
\end{prop}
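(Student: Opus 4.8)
The plan is to compute both idempotents $\alpha\alpha^*$ and $\beta^*\beta$ explicitly in the normal form \eqref{uniquely} of Remark \ref{remE}, and then invoke the uniqueness of that normal form for idempotents to read off when the two coincide. First I would recall from Remark \ref{remE} that in $E(G)$ the inverse of a product reverses factors, that each $\gamma_s = [s][s^{-1}]$ is idempotent and self-inverse, and that all idempotents commute. Using these facts and the defining relations $[g^{-1}][g][h] = [g^{-1}][gh]$ and $[g][h][h^{-1}] = [gh][h^{-1}]$, I would show that for $\alpha = \gamma_{s_1}\cdots\gamma_{s_n}[g]$ one has
\begin{equation*}
\alpha\alpha^* = \gamma_{s_1}\gamma_{s_2}\cdots\gamma_{s_n}[g][g^{-1}]\gamma_{s_n}\cdots\gamma_{s_1} = \gamma_{s_1}\cdots\gamma_{s_n}\gamma_{g},
\end{equation*}
where the last equality uses commutativity of idempotents together with the fact that the $\gamma$'s are idempotent, so repeated factors collapse. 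Since by hypothesis $g \neq s_i$ for all $i$, this is already in normal form with idempotent part indexed by the set $\{s_1,\ldots,s_n,g\}$.

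Next I would perform the symmetric computation for $\beta^*\beta$. Here the key point is that $\beta^* = [l^{-1}]\gamma_{t_m}\cdots\gamma_{t_1}$, so that
\begin{equation*}
\beta^*\beta = [l^{-1}]\gamma_{t_m}\cdots\gamma_{t_1}\gamma_{t_1}\cdots\gamma_{t_m}[l] = [l^{-1}]\gamma_{t_1}\cdots\gamma_{t_m}[l].
\end{equation*}
The remaining work is to push $[l^{-1}]$ and $[l]$ inward and rewrite $[l^{-1}]\gamma_{t_i}[l]$ in terms of the generators; using $\gamma_{t_i}=[t_i][t_i^{-1}]$ and the Exel relations one obtains a conjugation that I expect to produce $\gamma_{l^{-1}}$ as a factor, yielding
\begin{equation*}
\beta^*\beta = \gamma_{t_1}\gamma_{t_2}\cdots\gamma_{t_m}\gamma_{l^{-1}},
\end{equation*}
an idempotent whose normal form is indexed by the set $\{t_1,\ldots,t_m,l^{-1}\}$.

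Finally, I would compare the two normal forms. By the uniqueness clause of \eqref{uniquely} in Remark \ref{remE}, an idempotent of $E(G)$ is determined up to the order of factors by the \emph{set} of subscripts appearing in its $\gamma$-factorization. Hence $\alpha\alpha^* = \beta^*\beta$ holds precisely when $\{s_1,\ldots,s_n,g\} = \{t_1,\ldots,t_m,l^{-1}\}$, which is exactly the claimed equivalence (note this forces $n=m$). The main obstacle I anticipate is the middle step: correctly reducing $\beta^*\beta$ to the clean form $\gamma_{t_1}\cdots\gamma_{t_m}\gamma_{l^{-1}}$, since this requires careful bookkeeping with the non-symmetric relations $[g^{-1}][g][h]=[g^{-1}][gh]$ and $[g][h][h^{-1}]=[gh][h^{-1}]$ to justify that conjugating the idempotent core by $[l^{-1}]$ and $[l]$ simply appends the factor $\gamma_{l^{-1}}$ rather than altering the existing $\gamma_{t_i}$; the forward computation for $\alpha\alpha^*$ is comparatively direct because the trailing $[g][g^{-1}]$ is already the idempotent $\gamma_g$.
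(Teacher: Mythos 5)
Your computation of $\alpha\alpha^*$ and your closing appeal to the uniqueness of the idempotent normal form are both correct and coincide with what the paper does. The trouble is precisely the step you yourself flagged as the anticipated obstacle: the reduction of $\beta^*\beta=[l^{-1}]\gamma_{t_1}\cdots\gamma_{t_m}[l]$ to $\gamma_{t_1}\cdots\gamma_{t_m}\gamma_{l^{-1}}$. This step does not merely require careful bookkeeping --- it is false. The identity that actually follows from Exel's relations is $[h]\gamma_s=\gamma_{hs}[h]$ (both sides equal $[hs][s^{-1}]$: the left by $[h][s][s^{-1}]=[hs][s^{-1}]$, the right by $[hs][s^{-1}h^{-1}][h]=[hs][s^{-1}h^{-1}h]$). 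Hence pushing $[l^{-1}]$ through the idempotent core \emph{translates every subscript} rather than leaving the $\gamma_{t_i}$ untouched:
\[
\beta^*\beta=[l^{-1}]\gamma_{t_1}\cdots\gamma_{t_m}[l]=\gamma_{l^{-1}t_1}\cdots\gamma_{l^{-1}t_m}[l^{-1}][l]=\gamma_{l^{-1}t_1}\cdots\gamma_{l^{-1}t_m}\gamma_{l^{-1}}.
\]
A minimal counterexample to your intended formula: for $\beta=\gamma_b[a]$ with $a,b,1$ pairwise distinct one gets $\beta^*\beta=\gamma_{a^{-1}b}\gamma_{a^{-1}}$, whereas your formula predicts $\gamma_b\gamma_{a^{-1}}$; by the uniqueness in item 2 of Remark \ref{remE} these differ whenever $a^{-1}b\neq b$. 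Consequently the criterion your argument would actually deliver is $\{s_1,\ldots,s_n,g\}=\{l^{-1}t_1,\ldots,l^{-1}t_m,l^{-1}\}$, not the set equality claimed in the statement.

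For what it is worth, the paper's own proof computes only $\alpha\alpha^*$ and then writes $\beta^*\beta=\gamma_{t_1}\cdots\gamma_{t_m}\gamma_{l^{-1}}$ with no justification, i.e., it relies on exactly the same unproved (and incorrect) identity. So your proposal reproduces the published argument faithfully, gap included; but the honest conclusion is that the middle step cannot be repaired as written, and the right-hand set in the equivalence should be corrected to $\{l^{-1}t_1,\ldots,l^{-1}t_m,l^{-1}\}$ (with the corresponding adjustment propagated to the description of $E(G)^2$ that follows the proposition).
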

\begin{proof} We have that $\alpha\alpha^*=\gamma_{s_1} \gamma_{s_2} \ldots \gamma_{s_n} [g][g^{-1}]\gamma_{s_1} \gamma_{s_2} \ldots \gamma_{s_n} =\gamma_{s_1} \gamma_{s_2} \ldots \gamma_{s_n}\gamma_g.$
Then,
\begin{align*} \alpha\alpha^*=\beta^*\beta&\Longleftrightarrow \gamma_{s_1} \gamma_{s_2} \ldots \gamma_{s_n}\gamma_g=\gamma_{t_1} \gamma_{t_2} \ldots \gamma_{t_m} \gamma_{t^{-1}}\\
&\Longleftrightarrow \{s_1,s_2, \cdots, s_n, g\}=\{t_1,t_2, \cdots, t_n, l^{-1}\},
\end{align*} where the last equivalence follows from 2. of Remark \ref{remE}.\end{proof}
Using the restricted product to provide $E(G)$ with a groupoid structure we get by Lemma \ref{idemp} that,
$$E(G)^2=\{(\gamma_{s_1} \ldots \gamma_{s_n} [g],\gamma_{t_1} \ldots \gamma_{t_m} [l])\mid   \{s_1, \cdots, s_n, g\}=\{t_1, \cdots, t_n, l^{-1}\}\},$$ and $E(G)_{0}=\{\gamma_{s_1} \gamma_{s_2} \ldots \gamma_{s_n}\mid s_i\neq s_j, i\neq j, n\in \N\}.$

From \cite{NY}, follows that a  global action $\beta$ of a groupoid $\G$ on $X$ is a family of bijections $\beta=\{\beta_g\colon X_{g^{-1}}\to X_g\mid g\in \G\}$ such that:

\begin{itemize}
\item $X=\bigcup_{e\in \G_0} X_e;$
\item $\beta_e={\rm id}_{X_e},$ for all $e\in \G_0;$
\item $\beta_g\circ \beta_h=\beta_{gh},$ for all $(g,h)\in \G^2.$
\end{itemize}
Then  according to \cite[Proposition 10]{NY}, global actions of $\G$ on $X$  correspond to groupoid homomorphism $\G\to \G(X).$ On the other hand, in the case when  $\G$ is a group we obtain the definition of a partial group action on a set (see \cite[Definition 1.2]{E}) 

If $\mathcal{G}$ is an inductive groupoid, then  \cite[Proposition 4.1.7]{L} implies that $(\G,\otimes)$ is an inverse semigroup, where $\otimes$ denotes the pseudo product defined on $\G$ (see \cite[p. 112]{L}).

   Then we have the next.
\begin{prop} For every group G and any set X, there is a one-to-one correspondence between.
\begin{enumerate}
\item Partial actions of G on X.
\item Unital semigroup actions of $E(G)$ on $X.$
\item Groupoid homomorphisms $E(G)\to \G(X).$ 
\item Groupoid actions of $E(G)$ on $X.$
\end{enumerate}
\end{prop}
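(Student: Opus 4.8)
The plan is to establish the proposition as a chain of three bijections, one for each consecutive pair, each resting on a tool already at hand: the equivalence $(1)\Leftrightarrow(2)$ is Exel's theorem, $(3)\Leftrightarrow(4)$ is the groupoid reformulation of global actions from \cite[Proposition 10]{NY}, and $(2)\Leftrightarrow(3)$ is the Ehresmann--Schein--Nambooripad bridge between the inverse--semigroup structure of $E(G)$ and the groupoid structure it carries through the restricted product. Since the outer two links are bijections supplied by cited results, the whole content concentrates on the middle link.

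First I would dispose of $(1)\Leftrightarrow(2)$ by invoking \cite[Theorem 4.2]{EX}, which is precisely the stated correspondence between partial actions of $G$ on $X$ and actions of $E(G)$ on $X$. The only point requiring attention is the force of the word \emph{unital}: because $E(G)$ is a monoid with identity $[1]=\gamma_1$ by Remark \ref{remE} and $[1]$ is the top idempotent, the actions produced by Exel's construction are exactly those sending $[1]$ to $\mathrm{id}_X$, so that $X=\bigcup_{e\in E(G)_0}X_{\phi(e)}$; I would record this as the meaning of unital and check that both of Exel's constructions respect it. For $(3)\Leftrightarrow(4)$ I would regard $E(G)$ as a groupoid via the restricted product, with $E(G)^2$ and $E(G)_0$ as computed just after Proposition \ref{idemp}. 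A groupoid action of $E(G)$ on $X$ is, by definition, a family of bijections $\{\beta_\alpha\colon X_{\alpha^{-1}}\to X_\alpha\}$ obeying the three axioms recalled from \cite{NY}, and \cite[Proposition 10]{NY} asserts exactly that such families correspond bijectively to groupoid homomorphisms $E(G)\to\G(X)$; so this equivalence is immediate once the groupoid structure on $E(G)$ is fixed.

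The heart of the argument is $(2)\Leftrightarrow(3)$. In one direction, a unital semigroup homomorphism $\phi\colon E(G)\to\G(X)$ is automatically a groupoid homomorphism for the restricted products: an inverse--semigroup homomorphism preserves the involution, so if $\alpha\cdot\beta$ is defined in $E(G)$, i.e. $\alpha^*\alpha=\beta\beta^*$, then $\phi(\alpha)^*\phi(\alpha)=\phi(\beta)\phi(\beta)^*$, whence $\phi(\alpha)\cdot\phi(\beta)$ is defined in $\G(X)$ and equals $\phi(\alpha\beta)=\phi(\alpha\cdot\beta)$. This gives an injection of $(2)$ into $(3)$. In the other direction, given a groupoid homomorphism $\psi$, I would recover a semigroup homomorphism through the pseudo-product $\otimes$, which by \cite[Proposition 4.1.7]{L} reconstructs the inverse--semigroup multiplication of both $E(G)$ and $\G(X)$ from their inductive--groupoid structures via $\alpha\otimes\beta=(\alpha|_e)(e|\beta)$, a restricted product, where $e=d(\alpha)\wedge r(\beta)$.

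The main obstacle is exactly this last step: a groupoid homomorphism need not a priori respect the order or the meets of identities, so it is not formal that $\psi$ sends $\otimes$ to $\otimes$. I would attack this by proving that every groupoid homomorphism $E(G)\to\G(X)$ is in fact an inductive functor, that is, it preserves restrictions $\alpha|_e$ and meets $e\wedge f$ of idempotents; the essential tools are the explicit descriptions of the idempotents and of $E(G)^2$ in Remark \ref{remE} and Proposition \ref{idemp}, together with the facts that in $\G(X)$ the pseudo-product is ordinary composition of partial bijections and $\mathrm{id}_A\wedge\mathrm{id}_B=\mathrm{id}_{A\cap B}$. Once order preservation is secured, $\psi$ preserves $\otimes$, hence is a unital semigroup homomorphism, and the two assignments are mutually inverse. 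Alternatively, and perhaps more cheaply, one may close the loop $(1)\to(2)\to(3)\to(4)\to(1)$ and use that the three established arrows are bijections to force the fourth, which simultaneously confirms that the groupoid homomorphisms arising here are precisely the inductive ones corresponding to semigroup homomorphisms.
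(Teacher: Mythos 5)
Your decomposition coincides with the paper's: the link between (1) and (2) is quoted from Exel, the link between (3) and (4) is the definitional translation of \cite[Proposition 10]{NY} once $E(G)$ carries the restricted-product groupoid structure, and the whole weight falls on relating unital semigroup actions of $E(G)$ to groupoid homomorphisms $E(G)\to\G(X)$. The direction you do prove is sound and is the same argument the paper uses implicitly: an inverse-semigroup homomorphism preserves the involution, hence sends restricted products to restricted products. For the converse the paper passes from a global action $\beta$ to the map $\alpha\mapsto\beta_\alpha$ and asserts $\phi(\alpha\otimes\gamma)=\phi(\alpha)\otimes\phi(\gamma)$ by appeal to \cite[Proposition 4.1.7]{L}; this is exactly the step you isolate as the main obstacle.

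The problem is that neither of your two proposed repairs closes that step. The first --- showing that every groupoid homomorphism $E(G)\to\G(X)$ is automatically an inductive functor --- is false. Take $G=\Z/2\Z=\{1,g\}$; then $E(G)=\{[1],\gamma_g,[g]\}$ and, for the restricted product, $d([g])=r([g])=\gamma_g$, so the underlying groupoid is the disjoint union of the one-point groupoid $\{[1]\}$ and the isotropy group $\{\gamma_g,[g]\}\cong\Z/2\Z$. Sending $[1]\mapsto{\rm id}_A$, $\gamma_g\mapsto{\rm id}_B$ and $[g]$ to an involution of $B$ with $B\not\subseteq A$ is a groupoid homomorphism (there are no composable pairs between the two components to check), yet $\gamma_g\leq[1]$ while ${\rm id}_B\not\leq{\rm id}_A$, and $\phi(\gamma_g[1])={\rm id}_B\neq{\rm id}_{A\cap B}=\phi(\gamma_g)\phi([1])$; so it is neither order-preserving nor a semigroup homomorphism, and it is not unital in any sense. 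The second repair --- closing the loop $(1)\to(2)\to(3)\to(4)\to(1)$ --- is not logically sufficient: bijectivity of $(1)\leftrightarrow(2)$ and $(3)\leftrightarrow(4)$ together with injectivity of $(2)\to(3)$ gives no surjectivity unless you actually construct a map $(4)\to(1)$ and verify both round-trip composites are identities, which you do not do. To get a true statement one must restrict items (3) and (4) to ordered (inductive) homomorphisms and actions, which is what the Ehresmann--Schein--Nambooripad theorem genuinely provides; be aware that the paper's own proof silently assumes the identity $\phi(\alpha\otimes\gamma)=\phi(\alpha)\otimes\phi(\gamma)$ at precisely the same point, so your diagnosis of where the difficulty lies is accurate even though your plan does not resolve it.
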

\begin{proof} Be have already observed that there is a one-to-one correspondence between
partial actions of $G$ on $X$ and semigroup actions of $E(G)$ on $X,$ and between groupoid homomorphisms $E(G)\to \G(X),$
and  global actions of $G$   on $X.$ Moreover, given a semigroup action $\phi\colon E(G) \to \G(X),$ then let $\beta_\alpha=\phi(\alpha)$ and $X_\alpha={\rm im}(\phi(\alpha)).$ Since $\phi([1])={\rm id}_X$, one has that the family $\beta=\{\beta_\alpha\colon X_{\alpha^{-1}}\to X_\alpha \}_{\alpha\in E(G)}$ is a global action of $E(G)$ on $X.$ Conversely, given a global action $\beta=\{\beta_\alpha\colon X_{\alpha^{-1}}\to X_\alpha \}_{\alpha\in E(G)}$ of $E(G)$ on $X,$ let $\varphi\colon E(G)\ni \alpha \mapsto \beta_\alpha\in  \G(X)$. Then, $\varphi$ is an  action of $E(G)$ on $X.$ Indeed, if $\alpha,\beta \in E(G)$, then by \cite[Proposition 4.1.7]{L} we have that $\alpha\gamma=\alpha\otimes \gamma$ and $\phi(\alpha\gamma)=\phi (\alpha\otimes \gamma)=\phi(\alpha)\otimes  \phi(\gamma)=\phi(\alpha)\circ  \phi(\gamma), $ as desired.\end{proof}

\subsection{Conflict of interest statement }
On behalf of all authors, the corresponding author states that there is no conflict of interest.

\end{document}